\newtheorem{theorem}{Theorem}[section]
\newtheorem{lemma}[theorem]{Lemma}
\newtheorem{corollary}[theorem]{Corollary}
\newtheorem{proposition}[theorem]{Proposition}
\newtheorem{conjecture}[theorem]{Conjecture}
\theoremstyle{definition}
\newtheorem{definition}[theorem]{Definition}
\newtheorem{remark}[theorem]{Remark}
\newtheorem{example}[theorem]{Example}
\DeclareMathOperator{\im}{Im}
\DeclareMathOperator{\mult}{mult}
\DeclareMathOperator{\tr}{tr}
\DeclareMathOperator{\Tr}{Tr}
\DeclareMathOperator{\excludes}{\mathbf{X}}
\newcommand{\graph}[1]{\mathcal{G}_{#1}}
\newcommand{\R}{\mathbb{R}}
\newcommand{\N}{\mathbb{N}}
\newcommand{\Z}{\mathbb{Z}}
\newcommand{\F}{\mathbb{F}}
\newcommand{\exc}[1]{\excludes(#1)}
\newcommand{\set}[1]{\left\{ #1 \right\}}
\newcommand{\restr}[2]{{#1} |_{{#2}}}
\begin{document}

\title{Uniform exclude distributions of Sidon sets}

\author[Thornburgh]{Darrion Thornburgh}

\thanks{Part of this research was done in completion of an undergraduate thesis at Bard College.}

\date{\today}

\begin{abstract}
    A Sidon set $S$ in $\F_2^n$ is a set such that the pairwise sums of distinct points are all distinct.
    The exclude points of a Sidon set $S$ are the sums of three distinct points in $S$, and the exclude multiplicity of a point in $\F_2^n \setminus S$ is the number of such triples in $S$ it is equal to. 
    We call the function $d_S \colon \F_2^n \setminus S \to \Z_{\geq 0}$ taking points in $\F_2^n \setminus S$ to their exclude multiplicity the exclude distribution of $S$.
    We say that $d_S$ is uniform on $\mathcal{P}$ if $\mathcal{P}$ is an equally-sized partition $\mathcal{P}$ of $\F_2^n \setminus S$ such that $d_S$ takes the same values an equal number of times on every element of $\mathcal{P}$.
    In this paper, we use APN plateaued functions with all component functions unbalanced to construct Sidon sets $S$ in $(\F_2^n)^2$ whose exclude distributions are uniform on natural partitions of $(\F_2^n)^2 \setminus S$ into $2^n$ elements.
    We use this result and a result of Carlet to determine exactly what values the exclude distributions of the graphs of the Gold and Kasami functions take and how often they take these values.
\end{abstract}

\maketitle

\section{Introduction}\label{sec:intro}

Sidon sets, first introduced by Simon Sidon \cite{Sidon1932}, are an important notion in additive combinatorics.
In this paper, we consider Sidon sets in the $n$-dimensional vector space over $\F_2$, denoted as $\F_2^n$.
A \textbf{Sidon set} in $\F_2^n$ is a set $S$ such that $a + b = c + d$ has no solutions $(a,b,c,d) \in S^4$ where $a,b,c,d$ are pairwise distinct.
A Sidon set is called \textbf{maximal} if is not contained in any (strictly) larger Sidon sets.
The maximality of a Sidon set can also be determined by its exclude points.

\begin{definition}
    \textup{\cite{quadspaper}}
    Let $S \subseteq \F_2^n$ be a Sidon set.
    The \textbf{exclude points} of $S$ is the set     
    \[
        \exc{S} = \set{a+b+c \in \F_2^n : a,b,c \in S \text{ are pairwise distinct}}
    \]
    Also, the number of distinct triples summing to $x \in \F_2^n \setminus S$ is known as the \textbf{exclude multiplicity} (or the \textbf{multiplicity}) $\mult_S(x)$ of $x$ with respect to $S$.
\end{definition}
Equivalently, the exclude points of a Sidon set $S \subseteq \F_2^n$ is the set of points that, if added to $S$, violate the Sidon property.
So, $S$ is maximal if and only if $\exc{S} = \F_2^n \setminus S$.

In this paper, we study the exclude points of those Sidon sets that are the graphs of almost perfect nonlinear functions.
An \textbf{almost perfect nonlinear} (APN) function is a function $F \colon \F_2^n \to \F_2^n$ such that the equation $F(x+a) + F(x) = b$ has either $0$ or $2$ solutions for any $a,b \in \F_2^n$ where $a \neq 0$.
Note that it is oftentimes convenient to identify $\F_2^n$ with $\F_{2^n}$ to gain a multiplicative structure, and most of the known families of APN functions are constructed over finite fields.
APN functions are studied in cryptography due to their optimal resistance against differential cryptanalysis \cite{NybergBook1994}. 
However, APN functions are also interesting in other areas of research, such as additive combinatorics.
In particular, APN functions are often used in the study of Sidon sets (c.f. \cite{carlet_apnGraphMaximal}, \cite{GNagyThin}, \cite{RedmanRoseWalker}, \cite{taitwon2021}, or \cite{carletPicek}) since a function $F \colon \F_2^n \to \F_2^n$ is APN if and only if its graph $\graph F = \set{(x,F(x)) : x \in \F_2^n}$ is a Sidon set.

It is conjectured that any function obtained by changing the value of an APN function at a single point is not an APN function \cite{budaghyanCarletHellesetUpperBoundsDegree}.
Carlet showed in \cite{carlet_apnGraphMaximal} that this conjecture is equivalent to the following.

\begin{conjecture}\label{conj:allapnfunctionsmaximal}
\textup{\cite{budaghyanCarletHellesetUpperBoundsDegree} \cite{carlet_apnGraphMaximal}}
    The graphs of all APN functions are maximal Sidon sets.
\end{conjecture}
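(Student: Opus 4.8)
This is an open problem --- it is the conjecture of Budaghyan, Carlet and Helleseth in the reformulation due to Carlet \cite{carlet_apnGraphMaximal} --- so what follows is a strategy, not a complete argument; it goes through only for structured families of $F$. Fix an APN function $F\colon\F_2^n\to\F_2^n$. By the criterion recalled above, $\graph F$ is a maximal Sidon set in $(\F_2^n)^2$ exactly when $\exc{\graph F}=(\F_2^n)^2\setminus\graph F$, i.e.\ $\mult_{\graph F}\bigl((a,b)\bigr)>0$ for every $(a,b)$ with $b\neq F(a)$. Unwinding the definition of exclude multiplicity, this asks for pairwise distinct $x_1,x_2,x_3\in\F_2^n$ with $x_1+x_2+x_3=a$ and $F(x_1)+F(x_2)+F(x_3)=b$. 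A first simplification: when $b\neq F(a)$ distinctness is automatic, since $x_1=x_2$ would force $x_3=a$ and $F(x_3)=b$, contradicting $b\neq F(a)$. So $\mult_{\graph F}\bigl((a,b)\bigr)$ is exactly one sixth of the number of \emph{ordered} solutions $(x_1,x_2,x_3)$, and it suffices to show this count is nonzero for every off-graph $(a,b)$.

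The handle I would use is Fourier analysis over $\F_2^{2n}$. Writing $W_F(u,v)=\sum_{x\in\F_2^n}(-1)^{u\cdot x+v\cdot F(x)}$ for the Walsh transform ($u\cdot x$ the standard inner product on $\F_2^n$) and expanding the two linear constraints into characters, the number of ordered triples is
\[
    N(a,b)=\frac{1}{2^{2n}}\sum_{u,v\in\F_2^n}(-1)^{u\cdot a+v\cdot b}\,W_F(u,v)^3 ,
\]
and $(u,v)=(0,0)$ alone contributes $2^n$. Maximality is thus equivalent to the remaining terms never summing below $-2^{3n}$, and determining the exclude distribution means evaluating that cube sum --- hopeless without structure. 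This is the point of restricting to APN \emph{plateaued} $F$: each component $v\cdot F$ then has $W_F(\cdot,v)\in\{0,\pm2^{(n+s_v)/2}\}$, so $W_F(u,v)^3=2^{n+s_v}W_F(u,v)$, the cube sum telescopes, and (using $W_F(u,0)=0$ for $u\neq0$) one obtains the closed form
\[
    \mult_{\graph F}\bigl((a,b)\bigr)=\frac{1}{6}\Bigl(2^n+\sum_{v\in\F_2^n\setminus\{0\}}2^{s_v}(-1)^{v\cdot(F(a)+b)}\Bigr)\qquad\text{for }b\neq F(a).
\]
This depends on $(a,b)$ only through $c:=F(a)+b\in\F_2^n\setminus\{0\}$, so $d_S$ takes each value equally often on each vertical line $\{a\}\times(\F_2^n\setminus\{F(a)\})$ --- which is exactly the asserted uniformity on a $2^n$-part partition of $(\F_2^n)^2\setminus\graph F$ --- while positivity of every multiplicity reduces to evaluating the Fourier sum $\sum_{v\neq0}2^{s_v}(-1)^{v\cdot c}$ and checking it never reaches $-2^n$. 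The hypothesis that all components of $F$ be unbalanced is what, together with APN and plateaued, controls the amplitudes $s_v$; for instance, when all $s_v=1$ the sum collapses to $-2$ and $\mult_{\graph F}\equiv(2^n-2)/6$. With the $s_v$ determined for the Gold and Kasami functions --- here a result of Carlet supplies the spectral data in the cases the plateaued construction does not reach --- one reads off precisely which values their exclude distributions take and how often, maximality included.

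The real obstacle is everything outside this structured setting. A general APN function has no algebraic or spectral structure to exploit: its Walsh spectrum may be wide, the identity $W_F(u,v)^3=2^{n+s_v}W_F(u,v)$ fails, and the cube character sum in $N(a,b)$ becomes uncontrollable; the equivalent formulation --- that no single-point modification of an APN function is APN --- is no easier. So the honest version of this plan is conditional: (i) set up the count $N(a,b)$; (ii) for APN plateaued functions with all components unbalanced, evaluate the cube sum via the three-valued spectrum and deduce uniformity of $d_S$; (iii) feed the resulting data, together with Carlet's result, into the Gold and Kasami cases. Step (ii) --- and, above all, the passage from ``this particular $F$'' to ``all APN $F$'' --- is where essentially all the difficulty lies; the conjecture itself remains open.
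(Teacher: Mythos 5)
This statement is an open conjecture; the paper states it without proof (and could not prove it), and you correctly identify it as such rather than claiming a complete argument. The partial program you outline --- the Walsh-transform count of ordered triples, the reduction for APN plateaued functions with unbalanced components via the three-valued spectrum, the resulting uniformity of $d_{\graph F}$ on the partition into sets $\set{a}\times(\F_2^n\setminus\set{F(a)})$, and the use of Carlet's solution counts to settle the Gold and Kasami cases --- is precisely the route the paper itself takes for its partial results, so your proposal matches the paper's approach on everything that is actually provable.
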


To provide an overview of this paper, in \Cref{sec:prelim} we introduce preliminaries on Sidon sets and APN functions.
This is also the section where we discuss how we visualize $\F_2^n$ in a planar fashion, using a method first introduced in \cite{quadspaper}. 
In \Cref{sec:exclude-dist}, we discuss the exclude distributions of Sidon sets, which are defined as follows.
\begin{definition}
    Let $S \subseteq \F_2^n$ be a Sidon set.
    The \textbf{exclude distribution} of $S$ is the function $d_S \colon \F_2^n \setminus S \to \Z_{\geq 0}$ defined by $d_S(x) = \mult_S(x)$ for any $x \in \F_2^n \setminus S$.
\end{definition}
In particular, we introduce the notions of local equivalence and uniformity of the exclude distribution.
In short, if $S$ is a Sidon set and $X$ and $Y$ are subsets of $\F_2^n \setminus S$ such that $d_S$ takes the same values on $X$ and $Y$ the same number of times, we say that $d_S$ is \textit{locally equivalent} at $X$ and $Y$ (for a more formal definition, see \Cref{sec:exclude-dist}).
Also, if $\mathcal{P}$ is a partition of some set $X \subseteq \F_2^n \setminus S$ such that $d_S$ is locally equivalent at any two elements of $\mathcal{P}$, then we call $d_S$ \textit{uniform} on $\mathcal{P}$.
We then provide examples of uniform exclude distributions.

In \Cref{sec:exclude-dist-of-graphs}, we study the exclude distributions of graphs of APN functions.
In particular, we prove in \Cref{prop:emax-emin-bound-implies-maximal} that if $S \subseteq \F_2^n$ is a Sidon set of size $2^n$ (the same size as the graph of an APN function) such that the difference of the maximal and minimal values that $d_S$ takes is less than or equal to $\frac{2^n -2}{6}$, then $S$ must be maximal.
By using a method which was also used in \cite{carlet_apnGraphMaximal}, we express $d_{\graph F}$ in terms of the Walsh transform.

We denote by $Q_a(F)$ the set $\set{a} \times (\F_2^n \setminus F(a))$ for any $a \in \F_2^n$, and we let $\mathcal{Q}(\F_2^n, F)$ be the collection of all sets $Q_a(F)$.
In \Cref{sec:uniform-exclude-distribution}, we prove the following.

\begin{theorem}\label{thm:LocalEquiv-permutation-implies-maximal}
    Suppose $F \colon \F_2^n \to \F_2^n$ is an APN function such that $F(0) = 0$.
    If $d_{\graph F}$ is locally equivalent at $Q_a(F)$ and $Q_\alpha(F)$ by the permutation $(a,b) \mapsto (\alpha, b + F(a) + F(\alpha))$ for all $a,\alpha \in \F_2^n$, then $\graph F$ is maximal.
\end{theorem}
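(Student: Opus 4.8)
The plan is to convert the local-equivalence hypothesis into a statement pinning down $\mult_{\graph F}$ on translates of $\graph F$, reduce maximality to a single fibre, and settle that fibre via the Walsh transform.

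\emph{Step 1: reduction.} First I would observe that $\sigma_{a,\alpha}\colon(a,b)\mapsto(\alpha,b+F(a)+F(\alpha))$ carries $Q_a(F)$ bijectively onto $Q_\alpha(F)$ and sends $(a,F(a)+c)$ to $(\alpha,F(\alpha)+c)$. Hence, if $d_{\graph F}$ is locally equivalent at $Q_a(F)$ and $Q_\alpha(F)$ by $\sigma_{a,\alpha}$ for all $a,\alpha$, then $\mult_{\graph F}(a,b)$ depends only on $c:=b+F(a)$; call this common value $g(c)$ for $c\neq0$. Equivalently, $\exc{\graph F}$ is a union of translates $(0,c)+\graph F$, and since $F(0)=0$ the translate $(0,c)+\graph F$ meets the fibre over $0$ exactly at $(0,c)$. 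Therefore $\graph F$ is maximal if and only if $g(c)\ge1$ for every $c\neq0$, i.e.\ if and only if $d_{\graph F}$ never vanishes on $Q_0(F)=\set0\times(\F_2^n\setminus\set0)$.

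\emph{Step 2: $F$ is plateaued, and a closed formula.} Next I would use the Walsh-transform description of $d_{\graph F}$ established above (by Carlet's method), which for $(a,b)\notin\graph F$ has the shape $6\,\mult_{\graph F}(a,b)=2^{-2n}\sum_{u,v}(-1)^{u\cdot a+v\cdot b}W_F(u,v)^3$. Substituting this into $\mult_{\graph F}(a,b)=\mult_{\graph F}(\alpha,b+F(a)+F(\alpha))$ and comparing Fourier coefficients — first in $b$ (with $\alpha=0$), then in $a$ — forces $W_F(u,v)^3=\lambda(v)\,W_F(u,v)$ for all $u,v$, where $\lambda(v)=2^{-n}\sum_u W_F(u,v)^3$. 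Thus every component $v\cdot F$ with $v\neq0$ is plateaued, $W_F(u,v)\in\set{0,\pm L(v)}$ with $L(v)^2=\lambda(v)$. Plugging this back in and using $\sum_u(-1)^{u\cdot a}W_F(u,v)=2^n(-1)^{v\cdot F(a)}$ to collapse the $u\neq0$ part yields
\[
6\,g(c)=2^n+\frac1{2^n}\sum_{v\neq0}(-1)^{v\cdot c}L(v)^2\qquad(c\neq0),
\]
which is the Fourier incarnation of $6\,g(c)=\#\set{(x,y)\in(\F_2^n)^2:F(x)+F(y)+F(x+y)=c}$.

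\emph{Step 3: the estimate, and the main obstacle.} Summing over $c\neq0$ — equivalently, counting the solutions of $F(x)+F(y)+F(x+y)=0$, which the APN property together with $F(0)=0$ pins down as $3\cdot2^n-2$ — gives $\sum_{v\neq0}L(v)^2=2^{n+1}(2^n-1)$, so $\sum_{c\neq0}g(c)=\tfrac{(2^n-1)(2^n-2)}6$ and the mean of $g$ over $Q_0(F)$ is $\tfrac{2^n-2}6$. It remains to bound $g$ below. If $F$ has no bent component (automatic when $n$ is odd, where moreover $F$ must be almost bent and $g$ is constant), then every $L(v)^2\ge2^{n+1}$ and the displayed formula already gives $6\,g(c)\ge2^n-2$, hence $g(c)\ge1$ for $n\ge3$. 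The even case is what I expect to be the main obstacle: a bent component has $L(v)^2=2^n<2^{n+1}$, and if too many bent components lie in the hyperplane $c^\perp$ then $\sum_{v\neq0}(-1)^{v\cdot c}L(v)^2$ could drop to $-2^{2n}$, forcing $g(c)=0$ — and this genuinely happens for $n=2$, $F(x)=x^3$, which has a constant component. One must therefore show this cannot occur, i.e.\ that the bent and non-bent components of an APN plateaued function are not concentrated in a single hyperplane; for the Gold and Kasami functions this is governed by the multiplicative structure dictating which components are bent, and in general one would combine structural facts about APN functions (no affine component for $n\ge3$, and a result of Carlet on their Walsh spectra) with \Cref{prop:emax-emin-bound-implies-maximal} to force $g(c)\ge1$ for all $c\neq0$, hence the maximality of $\graph F$.
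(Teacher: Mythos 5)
Your Step 1 reduction is correct and matches the paper's starting point, and your Step 2 observation that the hypothesis forces $W_F^3(u,v)=\hat h(v)\,W_F(u,v)$ (hence plateauedness) is a genuinely interesting by-product that the paper does not note. But the proof is not complete: as you yourself flag, in Step 3 the lower bound $g(c)\geq 1$ fails to follow from your spectral inequality precisely when $n$ is even and the bent components of $F$ could fill the hyperplane $c^\perp$ (your computation shows $\sum_{v\in c^\perp,\,v\neq0}L(v)^2$ would then hit the threshold $2^{2n-1}-2^n$ exactly, giving $6g(c)=0$), and you offer no argument ruling this configuration out. Invoking \Cref{prop:emax-emin-bound-implies-maximal} does not rescue this, since that proposition needs $e_{\max}-e_{\min}\leq\frac{2^n-2}{6}$, which is not available here. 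So the central case is left open.

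The missing idea is Dillon's observation, recorded in \Cref{sec:uniform-exclude-distribution}: for any APN function $F$ and any $b\neq0$ there exists $(x,y,z)\in(\F_2^n)^3$ with $F(x)+F(y)+F(z)+F(x+y+z)=b$. This closes the argument without any spectral analysis. Your hypothesis (with $F(0)=0$) says that the number $N(\alpha)$ of solutions of the system $x+y+z=\alpha$, $F(x)+F(y)+F(z)=b+F(\alpha)$ is independent of $\alpha$ and equals $6\,d_{\graph F}(0,b)$. The union over all $\alpha$ of these solution sets is exactly the solution set of $F(x)+F(y)+F(z)+F(x+y+z)=b$, which Dillon guarantees is nonempty; hence some $N(\alpha)>0$, hence $N(0)>0$, hence $d_{\graph F}(0,b)>0$ for every $b\neq0$, and maximality follows from your Step 1. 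In short: your route is a different, Fourier-analytic decomposition that proves more than is needed in the odd/non-bent case but stalls exactly where the paper's one-line combinatorial input (Dillon) does the work uniformly.
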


Following this, we consider an example of the graph of the Gold function and observe that the Gold function has a graph that has an exclude distribution that is uniform on $\mathcal{Q}(\F_{2^n}, F)$. 
We generalize this in \Cref{thm:APNPlateauedComponent-UniformExcludeDist}.

\begin{theorem}\label{thm:APNPlateauedComponent-UniformExcludeDist}
    Let $F \colon \F_2^n \to \F_2^n$ be an APN function.
    If $F$ is a plateaued function whose component functions are all unbalanced, then $d_{\graph F}$ is uniform on $\mathcal{Q}(\F_2^n, F)$.
    If so, then $d_{\graph F}$ is locally equivalent at $Q_a(F)$ and $Q_\alpha(F)$ by the permutation $(a,b) \mapsto (\alpha, b + F(a) + F(\alpha))$ for all $a,\alpha \in \F_2^n$.
\end{theorem}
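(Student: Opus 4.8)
The plan is to express the exclude distribution $d_{\graph F}$ explicitly in terms of the Walsh transform of $F$, using the formula already developed in Section~\ref{sec:exclude-dist-of-graphs} (following Carlet's method), and then to exploit the plateaued hypothesis to show that the resulting count at a point $(a,b)$ depends only on the pair $(a,b)$ through the quantity $b + F(a)$, which is exactly what the claimed permutation $(a,b)\mapsto(\alpha, b+F(a)+F(\alpha))$ encodes. Concretely, for $x = (a,b) \in (\F_2^n)^2 \setminus \graph F$, the multiplicity $\mult_{\graph F}(a,b)$ counts triples of distinct points of $\graph F$ summing to $(a,b)$; writing such a triple as $(x_1,F(x_1)),(x_2,F(x_2)),(x_3,F(x_3))$ with $x_1+x_2+x_3 = a$ and $F(x_1)+F(x_2)+F(x_3) = b$, one converts this, via the standard inclusion-exclusion over additive characters, into an expression of the form $\frac{1}{2^{2n}}\sum_{u,v}(\pm)W_F(u,v)^{3}\,(-1)^{u\cdot a + v\cdot b}$ corrected by lower-order terms accounting for the distinctness of $x_1,x_2,x_3$ and for membership in $\graph F$. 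I would first write this identity carefully (or cite its appearance earlier in the paper), isolating the dependence on $b$ inside the character $(-1)^{v\cdot b}$.

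Next I would use that $F$ is plateaued: for every $v \neq 0$ the component function $x \mapsto v\cdot F(x)$ has Walsh spectrum $\{0, \pm \lambda_v\}$ for a single amplitude $\lambda_v$, and the unbalancedness hypothesis guarantees $W_F(0,v) \neq 0$ for all $v\neq 0$, i.e. the value $u=0$ is one of the nonzero Walsh coefficients of every component. The key computational step is to show that in the Walsh expression for $\mult_{\graph F}(a,b)$, after fixing $v$, the inner sum over $u$ of $W_F(u,v)^3 (-1)^{u\cdot a}$ is governed by $\lambda_v$ and the support of the component function's spectrum in a way that makes the whole expression, as a function of $(a,b)$, invariant under the substitution $a \mapsto \alpha$, $b \mapsto b + F(a) + F(\alpha)$. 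The point is that this substitution changes $(-1)^{u\cdot a + v\cdot b}$ to $(-1)^{u\cdot \alpha + v\cdot b}\cdot(-1)^{v\cdot(F(a)+F(\alpha))}$; since $(-1)^{v\cdot F(a)}W_F(u,v)$ behaves like a Walsh coefficient of a translate/twist of $F$, and for plateaued functions the multiset of cubed Walsh coefficients weighted by such twists is preserved, the sum is unchanged. I would verify this first for the case $\alpha = 0$, $F(\alpha) = 0$ (so the permutation is $(a,b)\mapsto(0, b+F(a))$), where the bookkeeping is cleanest, and then note that composing two such permutations (through an intermediate $\alpha = 0$) gives the general statement.

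Once local equivalence at $Q_a(F)$ and $Q_\alpha(F)$ via the stated permutation is established for all $a,\alpha$, uniformity on $\mathcal{Q}(\F_2^n,F)$ is immediate: the sets $Q_a(F) = \{a\}\times(\F_2^n \setminus F(a))$ partition $(\F_2^n)^2 \setminus \graph F$ into $2^n$ blocks each of size $2^n - 1$, the permutation $(a,b)\mapsto(\alpha, b+F(a)+F(\alpha))$ maps $Q_a(F)$ bijectively onto $Q_\alpha(F)$ (it sends $b \neq F(a)$ to $b + F(a) + F(\alpha) \neq F(\alpha)$), and by construction it preserves the values of $d_{\graph F}$; hence $d_{\graph F}$ takes each value the same number of times on every block. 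The second sentence of the theorem is then just the content of what was proved, so nothing further is needed.

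I expect the main obstacle to be the middle step: carefully controlling the twisted cubed-Walsh sum $\sum_{u} (-1)^{v\cdot F(a)} W_F(u,v)^3 (-1)^{u\cdot a}$ and showing it is invariant under the change of variables. The plateaued hypothesis reduces $W_F(u,v)$ to three possible values, but one must track \emph{which} $u$ achieve the nonzero amplitude and how the phase $(-1)^{v\cdot F(a)}$ interacts with the shift $a\mapsto\alpha$; the unbalancedness condition ($W_F(0,v)\neq 0$) is precisely what is needed to keep $u=0$ in play and thereby pin down the dependence on $b+F(a)$ rather than on $a$ and $b$ separately. If a direct Walsh manipulation proves unwieldy, an alternative is to argue more structurally: show that for plateaued APN $F$ with unbalanced components, the affine map $(x,y)\mapsto(x, y + F(x) + \text{const})$ on $(\F_2^n)^2$ — which fixes $\graph F$ setwise only up to the graph of $x\mapsto F(x)+F(x)+\text{const}$, so this needs care — induces the claimed symmetry of the exclude distribution; but I believe the Walsh-transform route, anchored at $\alpha = 0$, is the cleanest path.
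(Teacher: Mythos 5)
Your overall strategy matches the paper's: express $d_{\graph F}(a,b)$ via the Walsh transform (this is \Cref{prop:excludeMult-InTermsOf-Walsh}), show that the resulting sum depends on $(a,b)$ only through $b+F(a)$, and read off local equivalence under $(a,b)\mapsto(\alpha,b+F(a)+F(\alpha))$, hence uniformity on $\mathcal{Q}(\F_2^n,F)$. Your closing paragraph (that these local equivalences give uniformity, and that the permutation maps $Q_a(F)$ bijectively onto $Q_\alpha(F)$) is correct and is exactly how the paper concludes. But the decisive middle step is precisely where your argument stops being a proof: you assert that ``for plateaued functions the multiset of cubed Walsh coefficients weighted by such twists is preserved,'' and you yourself flag this as the main obstacle. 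As written this is not established, and the mechanism you describe (tracking which $u$ attain the amplitude $\lambda_v$, keeping $u=0$ ``in play'' via unbalancedness) does not obviously close it. That is a genuine gap, and it sits exactly at the point where the hypotheses of the theorem must be used.

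The paper closes the gap by quoting an identity from the proof of Corollary 3 of \cite{carlet_apnGraphMaximal}: for an APN plateaued $F$ with all component functions unbalanced,
\[
\sum_{(u,v)\in(\F_2^n)^2}(-1)^{v\cdot b+u\cdot a}W_F^3(u,v)=2^{2n}\,\bigl|\set{(x,y)\in(\F_2^n)^2 : F(x)+F(y)+F(a)=b}\bigr|,
\]
so that $d_{\graph F}(a,b)=\frac16\bigl|\set{(x,y):F(x)+F(y)=b+F(a)}\bigr|$ visibly depends only on $b+F(a)$, and the theorem follows in two lines. If you prefer to stay inside your own Walsh computation, the clean route is: plateauedness gives $W_F^3(u,v)=\lambda_v^2\,W_F(u,v)$, and Fourier inversion in $u$ gives $\sum_u W_F(u,v)(-1)^{u\cdot a}=2^n(-1)^{v\cdot F(a)}$, whence
\[
\sum_{(u,v)\in(\F_2^n)^2}(-1)^{v\cdot b+u\cdot a}W_F^3(u,v)=2^n\sum_{v\in\F_2^n}\lambda_v^2(-1)^{v\cdot(b+F(a))},
\]
a function of $b+F(a)$ alone. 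Note that in this direct computation it is plateauedness, not unbalancedness, that pins down the dependence on $b+F(a)$; unbalancedness is what allows the sum to be identified with the solution count in Carlet's identity (used later for the Gold and Kasami application). Once either version is in hand, your reduction to $\alpha=0$ and the composition of permutations are unnecessary.
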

Informally, if $F$ is an APN plateaued function whose component functions are unbalanced, then $\graph F$ has a very strong symmetry in its exclude points.

Using \Cref{thm:APNPlateauedComponent-UniformExcludeDist} and a result of Carlet, we prove in \Cref{sec:applicationGoldKasami} exactly what values the exclude distributions of the graphs of the Gold and Kasami functions take and how many times they take those values.

\begin{theorem}\label{thm:GoldKasami-MAIN}
    Suppose $n$ is even.
    Suppose $F \colon \F_{2^n} \to \F_{2^n}$ be a Gold function or a Kasami function.
    Let $\alpha(n) = \frac{2^n + (-2)^{\frac{n}{2}+1}- 2}{6}$, and let $\beta(n) = \frac{2^n + (-2)^{\frac{n}{2}} - 2}{6}$
    Then 
    \begin{enumerate}
        \item there are $2^n \cdot \frac{2^n -1}{3}$ exclude points of $\graph F$ with multiplicity $\alpha(n)$;
        \item there are $2^{n+1} \cdot \frac{2^n -1}{3}$ exclude points of $\graph F$ with multiplicity $\beta(n)$;
        \item $d_{\graph F}$ has image $\set{\alpha(n), \beta(n)}$.
    \end{enumerate}
\end{theorem}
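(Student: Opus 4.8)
The plan is to apply \Cref{thm:APNPlateauedComponent-UniformExcludeDist} to reduce the problem to a single block of $\mathcal{Q}(\F_{2^n}, F)$, and then to evaluate the Walsh sum computing the multiplicity on that block using the known Walsh spectrum of $F$ together with a cubic Gauss sum. The input I would quote about $F$ (the result of Carlet) is: for $\gcd(k,n)=1$ the Gold and Kasami functions are APN, and for $n$ even they are plateaued with every component $\Tr(bF(x))$, $b\in\F_{2^n}^*$, unbalanced — being bent of Walsh amplitude $2^{n/2}$ when $b$ is a non-cube of $\F_{2^n}^*$ and of amplitude $2^{n/2+1}$ when $b$ is a nonzero cube. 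In particular $F$ satisfies the hypotheses of \Cref{thm:APNPlateauedComponent-UniformExcludeDist}, so $d_{\graph F}$ is uniform on $\mathcal{Q}(\F_{2^n}, F)$, which partitions $\F_{2^n}^2 \setminus \graph F$ into the $2^n$ equally-sized blocks $Q_a(F)$. As $F$ is a power function, $F(0)=0$ and $Q_0(F)=\set{0}\times(\F_{2^n}\setminus\set 0)$; by uniformity it then suffices to compute $\mult_{\graph F}(0,b)$ for each $b\in\F_{2^n}^*$ and to multiply the resulting tallies by $2^n$.

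For this computation I would invoke the expression for $d_{\graph F}$ in terms of the Walsh transform $W_F(u,v)=\sum_{x\in\F_{2^n}}(-1)^{\Tr(ux+vF(x))}$ obtained in \Cref{sec:exclude-dist-of-graphs}, which specializes at $a=0$ to an identity of the form
\[
    \mult_{\graph F}(0,b)=\frac{1}{6\cdot 2^{2n}}\sum_{v\in\F_{2^n}}(-1)^{\Tr(vb)}\sum_{u\in\F_{2^n}}W_F(u,v)^3 .
\]
The term $v=0$ contributes $\frac{2^n}{6}$. For $v\neq 0$, plateauedness gives $W_F(u,v)^3=\mu_v^2\,W_F(u,v)$, where $\mu_v^2$ is the squared amplitude of the $v$-th component, and the inverse Walsh transform gives $\sum_u W_F(u,v)=2^n(-1)^{\Tr(vF(0))}=2^n$. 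Using that $\mu_v^2$ equals $2^n$ for $v$ a non-cube and $2^{n+2}$ for $v$ a nonzero cube, together with $\sum_{v\in\F_{2^n}^*}(-1)^{\Tr(vb)}=-1$, one rearranges this into
\[
    \mult_{\graph F}(0,b)=\frac{2^n-1}{6}+\frac{1}{2}\sum_{\substack{v\in\F_{2^n}^*\\ v\text{ a cube}}}(-1)^{\Tr(vb)} .
\]

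It remains to evaluate the last sum for $b\neq 0$. Writing $\psi$ for a multiplicative character of $\F_{2^n}^*$ of order $3$ and $g=\sum_{v\in\F_{2^n}^*}\psi(v)(-1)^{\Tr(v)}$ for the associated Gauss sum, the indicator of the nonzero cubes is $\frac{1}{3}(1+\psi+\psi^2)$, so the sum equals $\frac{1}{3}\bigl(-1+2g\,\mathrm{Re}\,\psi(b)\bigr)$, using $\psi(-1)=1$ and $g(\psi^2)=\overline{g}$; since $2\equiv-1\pmod 3$ this is the semiprimitive case, in which $g$ is the real integer $-(-2)^{n/2}$. Hence $\mult_{\graph F}(0,b)=\frac{2^n-2-2(-2)^{n/2}}{6}=\alpha(n)$ when $b$ is a nonzero cube (so $\psi(b)=1$) and $\mult_{\graph F}(0,b)=\frac{2^n-2+(-2)^{n/2}}{6}=\beta(n)$ when $b$ is a nonzero non-cube. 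There are $\frac{2^n-1}{3}$ nonzero cubes and $\frac{2(2^n-1)}{3}$ nonzero non-cubes in $\F_{2^n}$, so within $Q_0(F)$ the value $\alpha(n)$ is taken $\frac{2^n-1}{3}$ times and $\beta(n)$ is taken $\frac{2(2^n-1)}{3}$ times; multiplying by the $2^n$ blocks of $\mathcal{Q}(\F_{2^n},F)$ yields (1) and (2), and since $\alpha(n)\neq\beta(n)$ and both are attained this yields (3). I expect the main obstacle not to be the character-sum bookkeeping but to lie in assembling the two imported ingredients with exactly the right constants: the precise Walsh spectrum of the Kasami function for even $n$ — the Gold case being merely quadratic — together with the identification of its non-bent components with the cubes, and the exact value, sign included, of the semiprimitive cubic Gauss sum over $\F_{2^n}$.
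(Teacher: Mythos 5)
Your proposal is correct, and it reaches the same counts, but the central computation is done by a genuinely different route. The paper, after the same reduction (uniformity of $d_{\graph F}$ on $\mathcal{Q}(\F_{2^n},F)$ via \Cref{cor:GoldKasamiUniform}, hence restriction to $Q_0(F)$ and to the equation $F(x)+F(y)+F(x+y)=b$), simply imports Carlet's solution count \cref{eq:Carlet-solns}, which carries a $\pm$/$\mp$ ambiguity, and resolves the signs by the integrality argument of \Cref{lem:integer-IFF-0}. You instead re-derive that count from \Cref{prop:excludeMult-InTermsOf-Walsh}: plateauedness gives $W_F^3(u,v)=\mu_v^2W_F(u,v)$, the inner sum collapses to $2^n$, and the problem reduces to the character sum $\sum_{v\text{ cube}}(-1)^{\Tr(vb)}$, which you evaluate with the semiprimitive cubic Gauss sum $g=-(-2)^{n/2}$. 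I checked your intermediate formula $\mult_{\graph F}(0,b)=\frac{2^n-1}{6}+\frac12\sum_{v\ne0,\,v\text{ cube}}(-1)^{\Tr(vb)}$ and the two resulting values; they agree with $\alpha(n)$ and $\beta(n)$ and with the $n=4$ example in the paper. What each approach buys: yours determines the signs intrinsically (in effect reproving Carlet's formula and showing the ambiguous sign is $-(-2)^{n/2}$ uniformly), so no integrality lemma is needed; the price is two nontrivial imported inputs that the paper's route avoids, namely the exact Walsh spectrum with the identification of the amplitude-$2^{n/2+1}$ components of the Kasami function with the nonzero cubes (elementary for Gold via the radical of the quadratic form, but for Kasami this is the Dillon--Dobbertin theorem, a substantially deeper fact than the plateauedness the paper cites) and the signed evaluation of the semiprimitive Gauss sum. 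You flag both dependencies yourself, and both are correct as stated, so there is no gap; just be aware that the burden of proof has been shifted into those citations rather than removed.
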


This is a hard problem in general, and the exact values that the exclude distribution of the graph of any\footnote{Excluding almost bent (AB) functions (see \Cref{thm:AB-vanDamFlaass}).} other APN functions has yet to be determined.

\section{Background and Preliminaries}\label{sec:prelim}

\subsection{Cryptographic functions}\label{subsec:cryptographic-fcns}
A vectorial Boolean function is a function $F$ from $\F_2^n$ to $\F_2^m$, but in this paper, we only focus on the case $n = m$.
We study the exclude distributions of the graphs $\graph F = \set{(x,F(x)) : x \in \F_2^n}$ of almost perfect nonlinear (APN) functions $F \colon \F_2^n \to \F_2^n$ which are those functions such that $F(x+a) + F(x) = b$ has either $0$ or $2$ solutions for all $a,b \in \F_2^n$ such that $a \neq 0$.
APN functions are an important notion in cryptography as they are those functions that are optimally resistant to a so-called differential attack when used as a substitution box in a block cipher \cite{NybergBook1994}.

It is equivalent to say that a function $F \colon \F_2^n \to \F_2^n$ is APN if and only if $\graph F$ is a Sidon set, that is, a set such that the sum of any pair of distinct elements is distinct.
This connection between Sidon sets and APN functions forms a bridge between additive combinatorics and symmetric cryptography.
For this reason, studying either APN functions or Sidon sets can sometimes lead to results about the other (see, for instance \cite{RedmanRoseWalker} \cite{taitwon2021}). 

Let $F \colon \F_2^n \to \F_2^n$ be a function.
Then $F$ has a unique representation 
\[
    F(x) = \sum_{I \subseteq \set{1, \dots, n}} a_I \prod_{i \in I} x_i
\]
where $a_I \in \F_2^n$, called the \textbf{algebraic normal form} (ANF) of $F$.
The global degree of the ANF of $F$ is called the \textbf{algebraic degree} of $F$.
Moreover, if $F$ has algebraic degree at most $2$ it is called \textbf{quadratic}.
The following conjecture still remains a completely open question.

\begin{conjecture}\label{conj:algDegree}
\textup{\cite{budaghyanCarletHellesetUpperBoundsDegree}}
    No APN function $F \colon \F_2^n \to \F_2^n$ has algebraic degree $n$ for all $n \geq 3$.
\end{conjecture}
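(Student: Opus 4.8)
This statement is the long-standing open conjecture on the algebraic degree of APN functions, so what follows is necessarily a proposal for an attack rather than a complete argument; I indicate where every known approach currently stalls. The natural first move is to translate the degree condition into the language of the Walsh transform used throughout this paper. Writing $F_v(x) = v \cdot F(x)$ for the component function associated with a nonzero $v \in \F_2^n$, the coefficient of the top monomial $x_1 \cdots x_n$ in the algebraic normal form of $F_v$ equals $\sum_{x \in \F_2^n} F_v(x) \bmod 2$, which is $1$ exactly when $\wt(F_v)$ is odd. Since $\sum_{x}(-1)^{F_v(x)} = 2^n - 2\,\wt(F_v)$ and $2^n \equiv 0 \pmod 4$ for $n \geq 2$, this yields the clean criterion
\begin{equation*}
    \deg F = n \iff \exists\, v \neq 0 \text{ with } W_F(0,v) \equiv 2 \pmod 4,
\end{equation*}
where $W_F(0,v) = \sum_{x \in \F_2^n}(-1)^{v \cdot F(x)}$. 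Thus the conjecture is equivalent to the assertion that every component Walsh value at $u = 0$ of an APN function is divisible by $4$.

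The plan would then be to prove the divisibility $W_F(0,v) \equiv 0 \pmod 4$ for every APN $F$ and every $v \neq 0$. This is already known in the two regimes most relevant to this paper. For almost bent functions the extended Walsh spectrum is contained in $\set{0, \pm 2^{(n+1)/2}}$, and for $n \geq 3$ the exponent $(n+1)/2$ is at least $2$, so every Walsh value---in particular $W_F(0,v)$---is divisible by $4$; this is exactly why the statement carves out the AB case. For the plateaued APN functions with unbalanced components studied in \Cref{thm:APNPlateauedComponent-UniformExcludeDist}, the value $W_F(0,v)$ is a nonzero amplitude $\pm 2^{m}$ with $m \geq 2$ (the component being unbalanced forces it to be nonzero), and the same divisibility holds; hence $\deg F_v < n$ for every $v$ and the conjecture is settled for this entire class. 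The remaining step, and the true content of the conjecture, is to extend $4 \mid W_F(0,v)$ to APN functions whose Walsh spectra are not of plateaued type, where an individual Walsh value can a priori be $\equiv 2 \pmod 4$. One would attack this through the sum-of-powers identities satisfied by the APN Walsh spectrum together with weight-divisibility theorems for the associated linear codes, aiming to rule out a single residue-$2$ coefficient at $u=0$.

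An orthogonal route, internal to this paper, is to pass through Carlet's equivalence between the conjecture and \Cref{conj:allapnfunctionsmaximal}: it would suffice to show that the graph of every APN function is a maximal Sidon set. Here \Cref{prop:emax-emin-bound-implies-maximal} gives a usable handle, since it guarantees maximality once the spread $\max d_{\graph F} - \min d_{\graph F}$ of the exclude distribution is at most $\tfrac{2^n - 2}{6}$. For the plateaued families one can compute this spread exactly (as is done for the Gold and Kasami functions) and it falls well within the bound; the program would be to bound the spread uniformly over all APN functions. The main obstacle is precisely the lack of any structural control over the exclude distribution---equivalently over the Walsh spectrum---of a general, non-plateaued APN function: neither the divisibility route nor the maximality route currently yields the needed uniform estimate, and closing that gap is exactly what keeps the conjecture open. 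A reasonable intermediate target, fully within reach of the techniques developed here, is to settle the degree conjecture for all APN functions that are plateaued with unbalanced components, for which \Cref{thm:APNPlateauedComponent-UniformExcludeDist} already supplies the required symmetry of the exclude points.
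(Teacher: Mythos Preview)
You correctly recognize that the paper states this as an open conjecture and offers no proof; it is explicitly described there as ``a completely open question.'' Your Walsh-transform reformulation, that $\deg F = n$ if and only if $W_F(0,v) \equiv 2 \pmod 4$ for some nonzero $v$, is accurate, and your treatment of the AB and plateaued-with-unbalanced-components cases is sound as far as it goes.

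There is, however, a genuine logical error in your second approach. You claim that ``it would suffice to show that the graph of every APN function is a maximal Sidon set,'' invoking ``Carlet's equivalence between the conjecture and \Cref{conj:allapnfunctionsmaximal}.'' But the paper asserts only the one-way implication \Cref{conj:algDegree} $\Rightarrow$ \Cref{conj:changedAtOnePoint}; Carlet's equivalence is between \Cref{conj:changedAtOnePoint} and \Cref{conj:allapnfunctionsmaximal}, not between \Cref{conj:algDegree} and either of them. The forward implication is easy to see: if $F$ is APN of degree less than $n$ and $G$ agrees with $F$ except at a single point $a$, then $G - F$ is a nonzero multiple of the indicator function of $\{a\}$, which has algebraic degree $n$, so $G$ has degree exactly $n$ and (assuming \Cref{conj:algDegree}) cannot be APN. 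No converse is known or claimed. Consequently, establishing maximality of every APN graph would resolve \Cref{conj:allapnfunctionsmaximal} and \Cref{conj:changedAtOnePoint} but would leave \Cref{conj:algDegree} untouched; the exclude-distribution program you outline via \Cref{prop:emax-emin-bound-implies-maximal} is therefore aimed at a strictly weaker target than the conjecture you are trying to attack.
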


It turns out that if \Cref{conj:algDegree} is true, then any function obtained by changing an APN function at a single value is not APN.

\begin{conjecture}\label{conj:changedAtOnePoint}
\textup{\cite{budaghyanCarletHellesetUpperBoundsDegree}}
    Let $F \colon \F_2^n \to \F_2^n$ be an APN function.
    If $G \colon \F_2^n \to \F_2^n$ is a function obtained from $F$ by changing the value of $F$ at one point, then $G$ is not APN.
\end{conjecture}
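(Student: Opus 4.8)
The statement sits directly beneath the assertion ``if \Cref{conj:algDegree} is true, then any function obtained by changing an APN function at a single value is not APN,'' so the content to establish is precisely that implication: assuming \Cref{conj:algDegree}, a function $G$ obtained from an APN function $F$ by altering one value cannot be APN. The plan is to argue entirely through algebraic degree, exploiting the fact that a single-point modification is the cheapest possible way to force a degree-$n$ term into a Boolean function.

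First I would encode the modification additively. Let $u$ be the point at which $F$ and $G$ differ and set $v = F(u) + G(u) \neq 0$. Then $G = F + v\,\mathbf{1}_{\{u\}}$, where $v\,\mathbf{1}_{\{u\}}$ denotes the vectorial function equal to $v$ at $u$ and to $0$ elsewhere. The structural fact I would use is that the indicator satisfies $\mathbf{1}_{\{u\}}(x) = \prod_{i=1}^n (x_i + u_i + 1)$, whose top-degree monomial is $\prod_{i=1}^n x_i$ with coefficient $1$ regardless of $u$; hence $\mathbf{1}_{\{u\}}$ has algebraic degree exactly $n$, the maximum possible.

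Second, I would transfer this to the coordinate functions. For each $i$ we have $G_i = F_i + v_i\,\mathbf{1}_{\{u\}}$, so the coefficient of the top monomial $\prod_{j} x_j$ in the ANF of $G_i$ equals the corresponding coefficient in $F_i$ plus $v_i$. Invoking \Cref{conj:algDegree} for the APN function $F$ gives $\deg F < n$ (for $n \geq 3$), and since $\deg F = \max_i \deg F_i$, every coordinate $F_i$ has top-monomial coefficient $0$. Because $v \neq 0$, some index $i$ has $v_i = 1$, and for that $i$ the top-monomial coefficient of $G_i$ is forced to be $1$; thus $\deg G_i = n$ and therefore $\deg G = n$. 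Applying \Cref{conj:algDegree} a second time, now contrapositively, no function of algebraic degree $n$ on $\F_2^n$ with $n \geq 3$ can be APN, so $G$ is not APN, which is the desired conclusion.

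There is no genuine analytic obstacle here; the only point requiring care is the logical bookkeeping, namely that \Cref{conj:algDegree} is used twice — once to pin down $\deg F < n$ and once to exclude $\deg G = n$ — so the conclusion is conditional on that conjecture rather than unconditional. The real difficulty lies outside this implication: proving \Cref{conj:algDegree} itself (equivalently, the unconditional form of the present statement) is a deep open problem, and the short degree computation above is best viewed as recording exactly why the two conjectures are equivalent and travel together.
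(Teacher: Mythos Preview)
The statement is a conjecture, and the paper does not prove it; the paper merely records (citing \cite{budaghyanCarletHellesetUpperBoundsDegree}) the implication ``\Cref{conj:algDegree} $\Rightarrow$ \Cref{conj:changedAtOnePoint}'' in the sentence immediately preceding the conjecture, without supplying an argument. You correctly identified that the only content one could reasonably be asked to establish here is that conditional implication, and your proof of it is correct: writing $G = F + v\,\mathbf{1}_{\{u\}}$ with $v\neq 0$, the indicator $\mathbf{1}_{\{u\}}(x)=\prod_i(x_i+u_i+1)$ has top monomial $\prod_i x_i$ with coefficient $1$, so under \Cref{conj:algDegree} the degree-$n$ coefficients of $F$ all vanish and at least one coordinate of $G$ picks up a nonzero degree-$n$ coefficient, forcing $\deg G = n$ and hence (again by \Cref{conj:algDegree}) $G$ not APN.

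Since the paper itself offers no proof to compare against, there is no divergence of approach to discuss; your argument is precisely the standard degree argument from \cite{budaghyanCarletHellesetUpperBoundsDegree} that the paper is implicitly invoking. Your closing remark that the implication is straightforward while the unconditional statement remains genuinely open is also accurate and well placed. The only minor caveat is the implicit restriction to $n\geq 3$ inherited from \Cref{conj:algDegree}, which you already flag; the small cases are not of interest here.
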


Moreover, Carlet showed in \cite{carlet_apnGraphMaximal} that \Cref{conj:allapnfunctionsmaximal} and \Cref{conj:changedAtOnePoint} are equivalent.
It seems reasonable that \Cref{conj:allapnfunctionsmaximal} is true as it was shown in \cite{RedmanRoseWalker} that the smallest maximal Sidon set in $\F_2^n$ is of size $O((n \cdot 2^{n})^\frac{1}{3})$ by generalizing a result of Ruzsa \cite{Ruzsa1998}.
However, as mentioned in \cite{carlet_apnGraphMaximal}, 
``there seems to be room for the existence of APN functions whose graphs are non-maximal Sidon sets''
since $|\graph F| = 2^n$ is approximately $\sqrt{2}$ times smaller than the best-known upper bounds on the largest maximal Sidon set (c.f.  \cite{czerwinskiPott2023sidon}).

The \textbf{Walsh transform} of a vectorial Boolean function $F \colon \F_2^n \to \F_2^n$ is the function $W_F \colon (\F_2^n)^2 \to (\F_2^n)^2$ defined by 
\[
    W_F(a,b) = \sum_{x \in \F_2^n} (-1)^{b \cdot F(x) + a \cdot x}
\]
for all $a,b \in \F_2^n$, where $x \cdot y$ denotes the standard inner product.
The Walsh transform is useful as it can be used to characterize many important and desirable cryptographic properties \cite{chabaud_vaudenay_1995}.
Two families of vectorial Boolean functions that we will refer to throughout this paper are plateaued functions and almost bent functions.

\begin{definition}
    Let $F \colon \F_2^n \to \F_2^n$ be a function.
    We call $F$ \textbf{plateaued} if for every $v \in \F_2^n$, there exists $\lambda_v \geq 0$ such that $W_F(u,v) \in \set{0, \pm \lambda_v}$ for all $u \in \F_2^n$.
\end{definition}

\begin{definition}
    Let $F \colon \F_2^n \to \F_2^n$ be a function.
    If $W_F(a,b) \in \set{0, \pm 2^{\frac{n+1}{2}}}$ for all $a,b \in \F_2^n$ such that $(a,b) \neq (0,0)$, then we call $F$ \textbf{almost bent} (AB).
\end{definition}

A \textbf{component function} $v \cdot F \colon \F_2^n \to \F_2$ of a function $F \colon \F_2^n \to \F_2^n$ is given by $(v \cdot F)(x) = v \cdot F(x)$ for all $x \in \F_2^n$, where $v \neq 0$.
There is a more general notion of plateaued functions in terms of component functions of functions from $\F_2^n$ to $\F_2^m$, but our definition coincides with this more general notion for functions $F \colon \F_2^n \to \F_2^n$.

Since the Walsh transform always takes integer values, it follows that AB functions only exist for $n$ odd.
Also, it also immediately follows  observe that all AB functions are plateaued.
Moreover, any AB function is also APN (c.f. \cite{carletCharpinZinovievCodesBentDES} \cite{vandamflass}).
Conversely, it turns out that all APN plateaued functions are also AB if $n$ is odd \cite[Proposition 163]{CarletBook}.
However, not all APN functions are plateaued and not all plateaued functions are APN.

By a result of \cite{vandamflass}, AB functions can be characterized as those APN functions that have a graph with a constant exclude distribution.
\begin{theorem}\label{thm:AB-vanDamFlaass}
\textup{\cite{vandamflass}}
    Let $F \colon \F_2^n \to \F_2^n$ be a function.
    Then $F$ is AB if and only if the system of equations
    \[
        \begin{cases}
            x + y + z = a \\
            F(x) + F(y) + F(z) = b
        \end{cases}
    \]
    has $2^n -2$ or $3 \cdot 2^n - 2$ solutions $(x,y,z) \in (\F_2^n)^3$ for every $(a,b) \in (\F_2^n)^2$.
    If so, then the system has $2^n -2$ solutions if $b \neq F(a)$ and $3 \cdot 2^n - 2$ solutions otherwise.
\end{theorem}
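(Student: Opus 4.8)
The plan is to count, for each fixed $(a,b) \in (\F_2^n)^2$, the number $N(a,b)$ of triples $(x,y,z) \in (\F_2^n)^3$ solving the system by Fourier analysis on $\F_2^n$, and then to read off the Walsh spectrum of $F$ from the answer. First I would replace each of the two equations by a character sum, writing $[x+y+z = a] = 2^{-n}\sum_{u \in \F_2^n}(-1)^{u \cdot (x+y+z+a)}$ and similarly for the second equation with a dual variable $v$. Substituting these into $N(a,b) = \sum_{x,y,z}[x+y+z = a]\,[F(x)+F(y)+F(z) = b]$ and observing that the sum over $(x,y,z)$ then factors as a cube gives
\[
    N(a,b) = \frac{1}{2^{2n}}\sum_{u,v \in \F_2^n}(-1)^{u \cdot a + v \cdot b}\,W_F(u,v)^3 .
\]
Since $W_F(u,0) = 2^n$ for $u = 0$ and $0$ otherwise, the terms with $v = 0$ contribute exactly $2^n$, so that $N(a,b) = 2^n + 2^{-2n}\sum_{v \neq 0}\sum_{u}(-1)^{u \cdot a + v \cdot b}\,W_F(u,v)^3$. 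Two elementary identities will then be used: $\sum_u (-1)^{u \cdot a}W_F(u,v) = 2^n(-1)^{v \cdot F(a)}$, and $\sum_{v \neq 0}(-1)^{v \cdot c}$ equals $2^n - 1$ if $c = 0$ and $-1$ otherwise.

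For the forward implication I would assume $F$ is AB, so that $W_F(u,v)^2 \in \set{0, 2^{n+1}}$ for all $(u,v) \neq (0,0)$, whence $W_F(u,v)^3 = 2^{n+1}W_F(u,v)$ for every $v \neq 0$. Substituting this and collapsing the sum over $u$ via the first identity leaves $N(a,b) = 2^n + 2\sum_{v \neq 0}(-1)^{v \cdot (b + F(a))}$, which by the second identity equals $3 \cdot 2^n - 2$ when $b = F(a)$ and $2^n - 2$ when $b \neq F(a)$. This proves the forward direction together with the refinement in the last sentence of the statement.

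For the converse, suppose $N(a,b) \in \set{2^n - 2, \, 3 \cdot 2^n - 2}$ for all $(a,b)$. Counting the ``trivial'' solutions $(a,t,t),(t,a,t),(t,t,a)$ with $t \in \F_2^n$ (inclusion--exclusion on their overlaps, which all collapse to $(a,a,a)$, gives exactly $3 \cdot 2^n - 2$ of them) shows $N(a,F(a)) \geq 3 \cdot 2^n - 2$; hence equality holds, and there is no solution with $x,y,z$ pairwise distinct when $b = F(a)$. As this holds for every $a$, the set $\graph F$ has no four pairwise-distinct points summing to zero, i.e.\ $\graph F$ is a Sidon set and $F$ is APN. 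When $b \neq F(a)$, a coincidence among $x,y,z$ would force the third coordinate to be $a$ and hence $b = F(a)$, so every solution has $x,y,z$ pairwise distinct and $N(a,b) = 6\,\mult_{\graph F}(a,b)$ is divisible by $6$; together with $3 \cdot 2^n - 2 \equiv 4 \pmod{6}$ this forces $N(a,b) = 2^n - 2$. Thus $N(a,b) - 2^n = -2 + 2^{n+1}[b = F(a)]$ for every $(a,b)$, and Fourier-inverting the displayed formula for $N(a,b)$ (again isolating $v = 0$) gives $W_F(u,v)^3 = 2^{n+1}W_F(u,v)$ for all $v \neq 0$. Hence $W_F(u,v) \in \set{0, \pm 2^{(n+1)/2}}$ for $v \neq 0$, while $W_F(u,0) = 0$ for $u \neq 0$; so the only Walsh values away from the origin are $0, \pm 2^{(n+1)/2}$, i.e.\ $F$ is AB.

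I expect the converse to be the delicate part: the hypothesis only says $N(a,b)$ lies in a two-element set, not which value occurs where, so one cannot Fourier-invert immediately. The resolution is the two-step argument above --- first deducing that $F$ is APN from the case $b = F(a)$, then using divisibility by $6$ to pin down the value in the case $b \neq F(a)$ --- after which the inversion, and hence the whole Walsh spectrum, is forced. (As a byproduct this shows the hypothesis is vacuous for even $n$, since then neither $2^n - 2$ nor $3 \cdot 2^n - 2$ is divisible by $6$, consistent with the nonexistence of AB functions in even dimension.) Everything else is routine character-sum bookkeeping, the only pitfall being the consistent separate treatment of the $v = 0$ terms, where $W_F(\cdot,0)$ is supported at the origin alone.
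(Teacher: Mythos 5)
Your proof is correct, and worth noting up front: the paper itself gives no proof of this statement --- it is quoted verbatim from van Dam and Fon-Der-Flaass \cite{vandamflass} --- so there is nothing internal to compare against line by line. What you have written is essentially the standard argument (and close in spirit to the original): the character-sum identity $N(a,b) = 2^{-2n}\sum_{u,v}(-1)^{u\cdot a+v\cdot b}W_F(u,v)^3$ is exactly the content of the paper's \Cref{lem:exclude-mult-Walsh} (there attributed to Carlet), and your forward direction is the same computation the paper implicitly relies on when it asserts that AB graphs are $\frac{2^n-2}{6}$-covers. The two delicate points in your converse are both handled correctly: the inclusion--exclusion count of the $3\cdot 2^n-2$ degenerate solutions at $b=F(a)$ pins down $N(a,F(a))$ and yields APN-ness, and the divisibility-by-$6$ argument (using that the symmetric group acts freely on pairwise-distinct solutions) resolves the ambiguity in the two-element value set before Fourier inversion, which is precisely the step that cannot be skipped. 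Your closing inversion, giving $W_F(u,v)^3=2^{n+1}W_F(u,v)$ for $v\neq 0$ and hence $W_F(u,v)\in\set{0,\pm 2^{(n+1)/2}}$, is sound, and the parenthetical observation that the hypothesis is vacuous for even $n$ is a nice sanity check consistent with \Cref{lem:integer-IFF-0}. No gaps.
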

This characterization of AB functions directly implies that any APN function $F \colon \F_2^n \to \F_2^n$ is AB if and only if every point in $(\F_2^n)^2 \setminus \graph F$ has exclude multiplicity $\frac{2^n-2}{6}$ with respect to $\graph F$.
Hence, all AB functions have maximal Sidon sets as their graphs.
Note that it is known that the graphs of APN power functions and the graphs are APN plateaued functions are maximal \cite{budaghyanCarletHellesetUpperBoundsDegree}.

\begin{table}[ht!]
    \centering
    \begin{tabular}{c|c|c|c}
        Name & $d$ & Condition & Reference \\
        \hline
        Gold & $2^k + 1$ & $\gcd(k,n) = 1$ & 
        \cite{GoldR1968} \cite{NybergBook1994}
        \\
        Kasami & $2^{2k} - 2^k + 1$ & $\gcd(k,n) = 1$  & 
        \cite{JanwaWilsonKasami1993} \cite{Kasami1971}
        \\
        Welch & $2^t + 3$ & $n = 2t + 1$ &   
        \cite{Dobbertin1999Welch} 
        \\ 
        Niho & 
        $\begin{cases}
            2^t + 2^{\frac{t}{2}} -1 & \text{if $t$ even} \\
            2^t + 2^{\frac{3t+1}{2}} -1 & \text{if $t$ odd}
        \end{cases}$
        & $n = 2t+1$ & 
        \cite{Dobbertin1999Niho} 
        \\ 
        Inverse & $2^{2t} -1$ & $n = 2t+1$ &   
        \cite{NybergBook1994} \cite{BethDing1994Inverse}
        \\
        Dobbertin & $2^{4t} + 2^{3t} + 2^{2t} + 2^t - 1$ & $n = 5t$ &   
        \cite{Dobbertin2001Dobbertin}
        \\
    \end{tabular}
    \caption{Known infinite families of APN power functions $\F_{2^n} \to \F_{2^n}$ of the form $x \mapsto x^d$.}
    \label{tab:APN-inf-families}
\end{table}

\begin{table}[ht!]
    \centering
    \begin{tabular}{c|c|c|c}
        Name & $d$ & Condition & Reference \\
        \hline
        Gold & $2^k + 1$ & $\gcd(k,n) = 1$ & 
        \cite{GoldR1968} \cite{NybergBook1994}
        \\
        Kasami & $2^{2k} - 2^k + 1$ & $\gcd(k,n) = 1$  & 
        \cite{Kasami1971}
        \\
        Welch & $2^t + 3$ & $n = 2t + 1$ &   
        \cite{WeightDivisCCD2001} \cite{BinarymSequencesCCD2001}
        \\
        Niho & 
        $\begin{cases}
            2^t + 2^{\frac{t}{2}} -1 & \text{if $t$ even} \\
            2^t + 2^{\frac{3t+1}{2}} -1 & \text{if $t$ odd}
        \end{cases}$
        & $n = 2t+1$ & 
        \cite{HollmannXiang2001Niho}
    \end{tabular}
    \caption{Known infinite families of AB power functions $\F_{2^n} \to \F_{2^n}$ of the form $x \mapsto x^d$, $n$ odd.}
    \label{tab:ABinf-families}
\end{table}

In terms of equivalence relations, we call two functions $F,F' \colon \F_2^n \to \F_2^n$ \textbf{CCZ-equivalent} if there exists an affine permutation $\mathcal{A}$ of $(\F_2^n)^2$ such that $\mathcal{A}(\graph F) = \graph{F'}$.
There are very few infinite families of known APN functions, and \Cref{tab:APN-inf-families} and \Cref{tab:ABinf-families} lists the known infinite families of APN power functions and AB power functions, respectively.
Note that these families of APN power functions are up to CCZ-equivalence.

\subsection{Visualizing Sidon sets in \texorpdfstring{$\F_2^n$}{}}\label{sec:qapvis}

A common way to think about $\F_2^n$ is as the vertices of an $n$-dimensional hypercube in $\R^n$.
However, this becomes difficult to do as soon as $n = 4$.
Instead, we visualize $\F_2^n$ in a planar fashion.
The \textit{Qap Visualizer} \cite{QuadsVis} is an online web-based tool used to visualize Sidon sets in $\F_2^n$ where $1 \leq n \leq 14$.
To create a planar representation of $\F_2^n$, we will use a construction that was first introduced in \cite{quadspaper} which is equivalent to the construction used in \cite{QuadsVis}.
First, we start with a planar representation of $\F_2$ as in \Cref{fig:F2-rep}.

\begin{figure}[ht!]
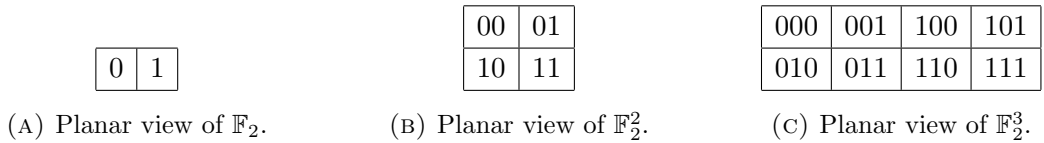

\begin{subfigure}{0.3\textwidth}
    \centering
    \begin{tabular}{|c|c|}
        \hline
         $0$ & $1$ \\
         \hline
    \end{tabular}
    \caption{Planar view of $\F_2$.}
    \label{fig:F2-rep}
\end{subfigure}
\begin{subfigure}{0.3\textwidth}
    \centering
    \begin{tabular}{|c|c|}
        \hline
         $00$ & $01$ \\
         \hline
         $10$ & $11$ \\
         \hline
    \end{tabular}
    \caption{Planar view of $\F_2^2$.}
    \label{fig:F2timesF2-rep}
\end{subfigure}
\begin{subfigure}{0.3\textwidth}
    \centering
    \begin{tabular}{|c|c|c|c|}
        \hline
         $000$ & $001$ & $100$ & $101$ \\
         \hline
         $010$ & $011$ & $110$ & $111$ \\
         \hline
    \end{tabular}
    \caption{Planar view of $\F_2^3$.}
    \label{fig:F2^3-rep}
\end{subfigure}
\caption{Planar views of $\F_2^n$ for $1 \leq n \leq 3$.}
\end{figure}

We then are able to inductively create planar grid layouts of $\F_2^n$ for $n > 1$.
Let us first consider the case when $n > 1$ is even.
If $n > 1$ is even, take two distinct copies of $\F_2^{n-1}$, vertically stack our two grids, and then prepend a $0$ to the vectors in the top half and a $1$ to the vectors in the bottom half. 
Similarly, in the case that $n > 1$ is odd, take two distinct copies of $\F_2^{n-1}$, horizontally stack the two grids, and then prepend a $0$ to the vectors in the left half and a $1$ to the vectors in the right half. 
See \Cref{fig:F2^3-rep}.
From these two steps, we can inductively construct a planar grid layout of $\F_2^n$ that has $2^{\lfloor \frac{n}{2} \rfloor}$ rows and $2^{\lceil \frac{n}{2} \rceil}$ columns for any $n \in \N$.

Using this planar representation of $\F_2^n$, we can visualize Sidon sets in $\F_2^n$ for any $n \in \N$. 
For a Sidon set $S \subseteq \F_2^n$, we picture 
points in $S$ as green diamonds, and the exclude points of $S$ are labeled with their multiplicity.

\begin{figure}[ht!]
    \centering
\begin{subfigure}{0.4\textwidth}
    \centering
    \includegraphics[width=0.4\textwidth]{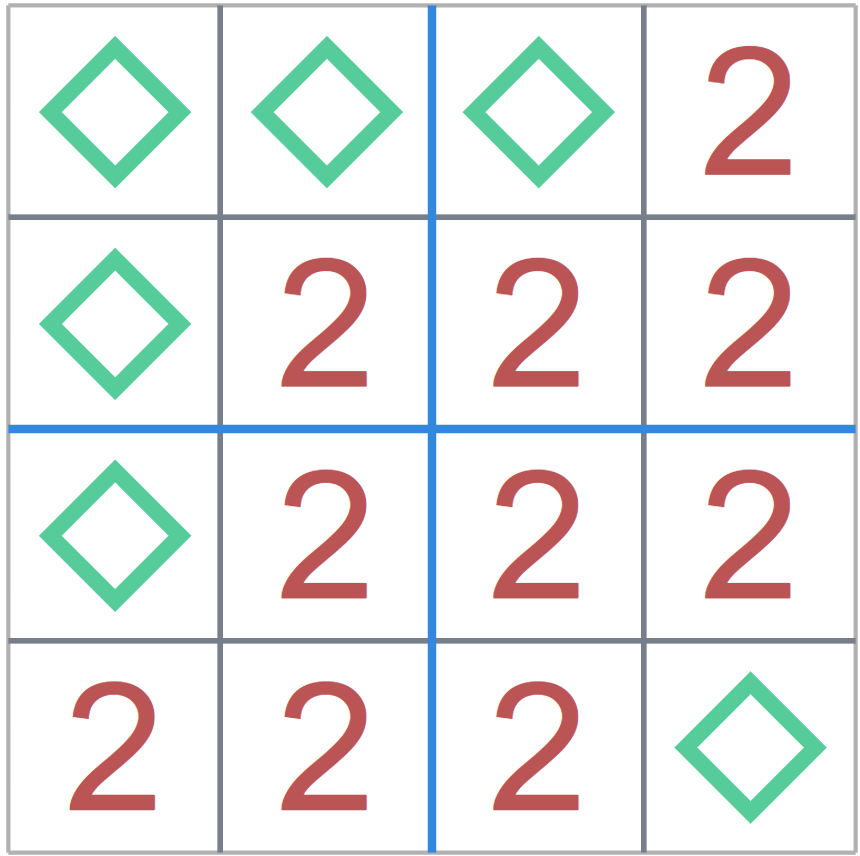}
    \caption{A maximal Sidon set in $\F_2^4$ of size $6$.}
    \label{fig:maxSidonSet-dim4}
\end{subfigure}
\begin{subfigure}{0.4\textwidth}
    \centering
    \includegraphics[width=0.45\textwidth]{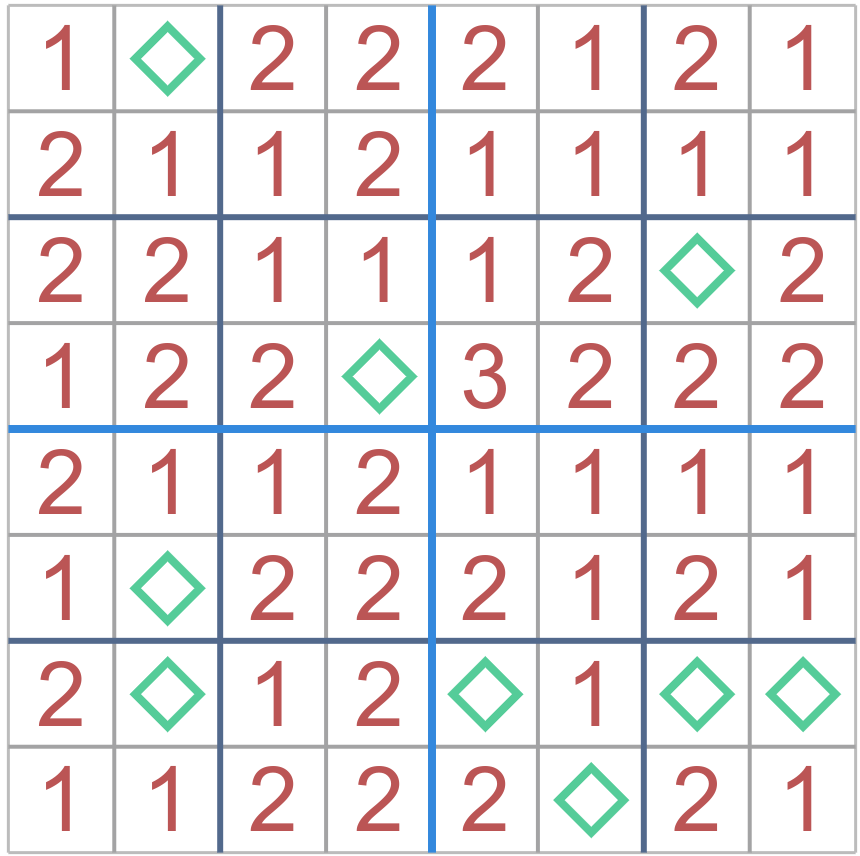}
    \caption{A Sidon set in $\F_2^6$ of size $9$.}
    \label{fig:maxSidonSet-dim6}
\end{subfigure}
    \caption{Two Sidon sets of maximum size for their dimension.}
    \label{fig:maximalSize-Sidonsets}
\end{figure}

\section{The exclude distribution}\label{sec:exclude-dist}

For a Sidon set $S \subseteq \F_2^n$, the function $d_S \colon \F_2^n \setminus S \to \Z_{\geq 0}$ defined by $d_S(x) = \mult_S(s)$ for all $x \in \F_2^n \setminus S$ is called the exclude distribution of $S$.
In this section, we provide some elementary results on exclude distributions in general.
Also, we will introduce some definitions that will be used later on in \Cref{sec:exclude-dist-of-graphs}, where we will study the exclude distributions of graphs of APN functions.
Moreover, in \Cref{prop:emax-emin-bound-implies-maximal}, we prove that any Sidon set of size $2^n$ (the same size as the graphs of APN functions) whose exclude distribution varies in value by at most $\frac{2^n -2}{6}$ must be a maximal Sidon set.

\subsection{Preliminaries on the exclude distribution}
Recall that the exclude set $\exc{S}$ of a Sidon set $S \subseteq \F_2^n$ is exactly the set of points such that any superset of $S$ including a point from $\exc S$ is not a Sidon set.
The following proposition from \cite{quadspaper} relates the size of $S$ and the sum of all the multiplicities of points in $\exc{S}$.

\begin{proposition}\label{prop:sumMults-equal-choose3}
    \textup{\cite{quadspaper}}
    Let $S \subseteq \F_2^n$ be a Sidon set.
    Then $\sum_{x \in \exc{S} } \mult_S(x) = \binom{|S|}{3}$.
\end{proposition}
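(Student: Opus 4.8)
The plan is to prove the identity by a double-counting argument: count the set of unordered triples $\{a,b,c\}$ of pairwise-distinct elements of $S$ in two different ways, on one hand directly and on the other hand by sorting them according to the value of the sum $a+b+c$.

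First I would observe that, since $S$ is a Sidon set, the map sending an unordered triple $\{a,b,c\}$ of pairwise-distinct points of $S$ to the point $a+b+c \in \F_2^n$ has a useful injectivity-type property when we also remember that one of the three summands must lie outside $S$: concretely, I would show that if $a+b+c = a'+b'+c' =: x$ with $\{a,b,c\}$ and $\{a',b',c'\}$ both triples of distinct points of $S$, then in fact $x \notin S$ (otherwise $a+b+c+x = 0$ would be a sum of four — not necessarily distinct — points of $S$; one checks that $x \notin \{a,b,c\}$ forces four pairwise-distinct points summing to zero, contradicting the Sidon property, while $x \in \{a,b,c\}$, say $x = a$, gives $b + c = 0$, i.e. $b = c$, again a contradiction). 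Hence every such triple sums to a point of $\F_2^n \setminus S$, and for $x \in \F_2^n \setminus S$ the number of triples summing to $x$ is by definition $\mult_S(x)$ (and triples summing to a point \emph{not} in $\exc{S}$ simply do not occur, i.e. contribute $0$).

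Then the counting goes through cleanly. The total number of unordered triples of pairwise-distinct elements of $S$ is $\binom{|S|}{3}$ by definition of the binomial coefficient. On the other hand, partitioning these triples by the value $x = a+b+c$ — which by the previous paragraph always lies in $\exc{S} \subseteq \F_2^n \setminus S$, and where the fiber over $x$ has size exactly $\mult_S(x)$ — gives $\sum_{x \in \exc{S}} \mult_S(x)$. Equating the two counts yields $\sum_{x \in \exc{S}} \mult_S(x) = \binom{|S|}{3}$, as claimed.

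The only genuinely non-routine step is the one in the second paragraph: verifying that a sum of three distinct points of a Sidon set never lands back in the Sidon set, and more generally that the fiber of the summation map over a point $x$ has size precisely $\mult_S(x)$ with no hidden overcounting. This is really just a careful case analysis using the Sidon property (no four pairwise-distinct points sum to zero), so I expect it to be short; everything else is bookkeeping with binomial coefficients. One should also note the edge case $|S| \le 2$, where both sides are $0$ and $\exc{S} = \emptyset$, so the identity holds trivially.
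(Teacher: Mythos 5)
Your argument is correct. The paper itself does not prove this proposition --- it is quoted from \cite{quadspaper} without proof --- but your double count of the $\binom{|S|}{3}$ unordered triples of distinct points of $S$ according to their sum, together with the check that such a sum never lands back in $S$ (so each fiber has size exactly $\mult_S(x)$ and points outside $\exc{S}$ contribute $0$), is precisely the standard argument one would expect for it.
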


The complement of a Sidon set $S \subseteq \F_2^n$ is the disjoint union of $\exc S$ and the set of all points of exclude multiplicity $0$ (with respect to $S$).
For this reason, we have the following proposition.
\begin{proposition}\label{prop:sum-exc-equals-eexc}
    Let $S \subseteq \F_2^n$ be a Sidon set.
    Then $\sum_{x \in \exc S} \mult_S(x) = \sum_{x \in \F_2^n \setminus S} \mult_S(x)$.
\end{proposition}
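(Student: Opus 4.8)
The plan is to exploit the fact, already noted in the excerpt just before \Cref{prop:sum-exc-equals-eexc}, that $\F_2^n \setminus S$ is the disjoint union of $\exc S$ with the set of points of exclude multiplicity $0$, and then observe that the missing terms contribute nothing to the sum.

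First I would verify carefully that $\exc S \subseteq \F_2^n \setminus S$, so that the right-hand sum genuinely ranges over a superset of the index set of the left-hand sum. If $x = a + b + c$ for pairwise distinct $a,b,c \in S$ and we had $x \in S$, then either $x \notin \{a,b,c\}$, giving $a + b = c + x$ with $a,b,c,x$ pairwise distinct and contradicting the Sidon property, or $x \in \{a,b,c\}$, say $x = a$, forcing $b + c = 0$, i.e.\ $b = c$, again a contradiction. Hence $\exc S$ and $S$ are disjoint.

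Next I would set $Z = \set{x \in \F_2^n \setminus S : \mult_S(x) = 0}$. By the definition of $\exc S$, a point of $\F_2^n \setminus S$ lies in $\exc S$ precisely when it equals a sum of three pairwise distinct elements of $S$, which is exactly the condition $\mult_S(x) \geq 1$. Therefore $\F_2^n \setminus S = \exc S \sqcup Z$, a disjoint union. Splitting the sum on the right accordingly gives
\[
    \sum_{x \in \F_2^n \setminus S} \mult_S(x) = \sum_{x \in \exc S} \mult_S(x) + \sum_{x \in Z} \mult_S(x) = \sum_{x \in \exc S} \mult_S(x),
\]
since every term in the last sum is zero by the definition of $Z$. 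This is the claimed identity.

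There is essentially no obstacle here; the only thing requiring a moment of care is the observation that exclude points cannot lie in $S$, which is where the Sidon hypothesis is actually used, and which justifies writing the complement as the indicated disjoint union. Everything else is bookkeeping.
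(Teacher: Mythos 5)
Your proof is correct and follows essentially the same route as the paper's: decompose $\F_2^n \setminus S$ as the disjoint union of $\exc S$ and the zero-multiplicity points, and note that the extra terms vanish. The explicit verification that $\exc S \cap S = \emptyset$ is a welcome extra care the paper leaves implicit, but it does not change the argument.
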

\begin{proof}
    If $x \in \F_2^n \setminus S$, then $x \notin \exc{S}$ if and only if $\mult_S(x) = 0$.
    Therefore, the sums of the multiplicities of points in $\exc{S}$ and $\F_2^n \setminus S$ are equal.
\end{proof}

In other words, for a Sidon set $S$, the sum of the exclude multiplicities of points in $\F_2^n \setminus (\exc{S} \cup S)$ is always zero because all points with non-zero exclude multiplicities are in the excludes of $S$.
Hence, we have the following equality
\begin{equation}\label{eqn:exc-eexc-Schoose3}
     \binom{|S|}{3} 
     = \sum_{x \in \F_2^n \setminus S} \mult_S(x).
\end{equation}

Denote by $e_{\min}(S)$\label{sym:excludeMin} and $e_{\max}(S)$\label{sym:excludeMax} the minimal and maximal exclude point multiplicities, respectively.
That is,  
\begin{align*}
    e_{\min}(S) &= \min_{x \in \F_2^n \setminus S} \mult_S(x), \text{ and} \\
    e_{\max}(S) &= \max_{x \in \F_2^n \setminus S} \mult_S(x).
\end{align*}
We then can use \cref{eqn:exc-eexc-Schoose3} to prove the following proposition.

\begin{proposition}\label{prop:sumMults-equal-choose3-improved}
    Let $S \subseteq \F_2^n$ be a Sidon set, and let $s = |S|$.
    Let $z$ be the number of points in $\F_2^n \setminus S$ with multiplicity $0$.
    Then 
    \begin{equation}\label{eq:inequality-excpts}
        (2^n - s)e_{\min}(S)  \leq \binom{s}{3} \leq  (2^n - s - z)e_{\max}(S).
    \end{equation}
\end{proposition}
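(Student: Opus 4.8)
The plan is to read both inequalities directly off the identity \eqref{eqn:exc-eexc-Schoose3}, namely $\binom{s}{3} = \sum_{x \in \F_2^n \setminus S} \mult_S(x)$, by bounding the summands from below and above respectively. No new combinatorial input is needed beyond \Cref{prop:sum-exc-equals-eexc} and the definitions of $e_{\min}(S)$, $e_{\max}(S)$, and $z$.

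For the left inequality, I would note that $\F_2^n \setminus S$ has exactly $2^n - s$ elements, and by definition every $x$ in this set satisfies $\mult_S(x) \geq e_{\min}(S)$. Summing this bound over all $x \in \F_2^n \setminus S$ gives $\binom{s}{3} = \sum_{x \in \F_2^n \setminus S} \mult_S(x) \geq (2^n - s)\, e_{\min}(S)$, which is the first inequality.

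For the right inequality, I would first discard the points of multiplicity $0$: since a point of $\F_2^n \setminus S$ contributes $0$ to the sum precisely when it lies outside $\exc{S}$, \Cref{prop:sum-exc-equals-eexc} lets me rewrite $\binom{s}{3} = \sum_{x \in \exc S} \mult_S(x)$. The set $\exc{S}$ has exactly $(2^n - s) - z$ elements, because $\F_2^n \setminus S$ is the disjoint union of $\exc{S}$ and the $z$ points of multiplicity $0$. Each $x \in \exc{S}$ satisfies $\mult_S(x) \leq e_{\max}(S)$, so summing gives $\binom{s}{3} \leq (2^n - s - z)\, e_{\max}(S)$, completing the proof.

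There is no real obstacle here; the only point requiring a moment of care is making sure the cardinality of $\exc{S}$ is correctly identified as $2^n - s - z$ rather than $2^n - s$, i.e.\ that the upper bound is applied over $\exc{S}$ and not over all of $\F_2^n \setminus S$ — this is exactly the improvement over \Cref{prop:sumMults-equal-choose3} and is what makes the bound in \eqref{eq:inequality-excpts} sharp enough to be useful later (e.g.\ in \Cref{prop:emax-emin-bound-implies-maximal}).
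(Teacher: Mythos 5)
Your proposal is correct and follows essentially the same argument as the paper: the lower bound comes from bounding each summand of $\sum_{x \in \F_2^n \setminus S} \mult_S(x)$ below by $e_{\min}(S)$, and the upper bound from restricting the sum to $\exc{S}$ (of size $2^n - s - z$) via \Cref{prop:sum-exc-equals-eexc} and bounding each term above by $e_{\max}(S)$. No differences worth noting.
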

\begin{proof}
    By \cref{eqn:exc-eexc-Schoose3}, we have 
    $\binom{s}{3} 
    = \sum_{x \in \F_2^n \setminus S} \mult_S (x) 
    \geq |\F_2^n \setminus S| \cdot e_{\min}(S) 
    =  (2^n - s) e_{\min}(S)$.
    Moreover, we also have
    $\binom{s}{3} = \sum_{x \in \exc S} \mult_S (x) 
        \leq |\exc S| \cdot e_{\max}(S) 
        = (|\F_2^n \setminus S | - z)  e_{\max}(S) 
        = (2^n - s - z) e_{\max}(S).$
    Thus, \cref{eq:inequality-excpts} holds.
\end{proof}

In the case where $e_{\min}(S)$ and $e_{\min}(S)$ are equal, we call $S$ a \textbf{$k$-cover}\footnote{The term $k$-cover was first conceived by Redman, Rose, and Walker \cite{RedmanRoseWalker}.} where $k = e_{\min}(S) = e_{\max}(S)$.
Note that for all $n \in \N$, there does not always exist $k$ such that there is a $k$-cover in $\F_2^n$ \cite{quadspaper}.
Any $k$-cover is a maximal Sidon set if  $k > 0$.
Hence, all non-trivial $k$-covers are maximal Sidon sets.
In some sense, $k$-covers are the most symmetrical Sidon sets and very little is known about them in general.
What is known is that there exist a $(\frac{2^n-2}{6})$-cover for every $n \geq 3$ since $\graph F$ is a $(\frac{2^n-2}{6})$-cover for any AB function $F \colon \F_2^n \to \F_2^n$ by \Cref{thm:AB-vanDamFlaass}.

\begin{proposition}\label{prop:kcovers-constantDist-classification}
    Let $S$ be a Sidon set in $\F_2^n$.
    The following are equivalent:
    \begin{enumerate}
        \item there exists some $k \in \Z_{\geq 0}$ such that $S$ is a $k$-cover,
        \item $e_{\min}(S) = e_{\max}(S)$,
        \item $d_S$ is constant,
    \end{enumerate}
\end{proposition}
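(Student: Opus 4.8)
The plan is to prove the cycle of implications $(1) \Rightarrow (2) \Rightarrow (3) \Rightarrow (1)$, noting that two of the three equivalences are essentially immediate from the definitions already in place. Indeed, the definition of a $k$-cover given just above the proposition says precisely that $S$ is a $k$-cover when $e_{\min}(S) = e_{\max}(S)$ and $k$ equals this common value; so $(1) \Leftrightarrow (2)$ is definitional, and it suffices to establish $(2) \Leftrightarrow (3)$.

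For $(2) \Rightarrow (3)$, I would assume $e_{\min}(S) = e_{\max}(S) =: k$ and observe that for every $x \in \F_2^n \setminus S$ the definitions of $e_{\min}$ and $e_{\max}$ give
\[
    k = e_{\min}(S) \leq \mult_S(x) \leq e_{\max}(S) = k,
\]
hence $d_S(x) = \mult_S(x) = k$ for all $x$, i.e.\ $d_S$ is constant. For $(3) \Rightarrow (2)$, if $d_S \equiv c$ on $\F_2^n \setminus S$, then $e_{\min}(S) = \min_{x} d_S(x) = c = \max_{x} d_S(x) = e_{\max}(S)$, which is $(2)$.

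The only genuinely delicate point is the trivial domain issue: if $\F_2^n \setminus S = \varnothing$ then $e_{\min}(S)$, $e_{\max}(S)$ and "$d_S$ constant" are all vacuous. I would dispatch this by recalling that any Sidon set $S$ satisfies $\binom{|S|}{2} \le 2^n - 1$, so $|S| < 2^n$ whenever $n \ge 2$, and for $n \le 1$ the statement is vacuously true for all three conditions simultaneously; thus the equivalence holds in every case. Beyond this bookkeeping, there is no real obstacle — the content of the proposition is that it records the definitional coincidence between the two natural ways ($e_{\min} = e_{\max}$ versus $d_S$ constant) of saying the exclude distribution does not vary.
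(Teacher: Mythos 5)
Your proof is correct; the paper itself states this proposition without proof, treating it as an immediate consequence of the definition of a $k$-cover (which is given precisely as the case $e_{\min}(S) = e_{\max}(S)$), and your write-up simply makes that definitional unwinding explicit, including the harmless edge case where $\F_2^n \setminus S$ is empty. Nothing is missing.
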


So, in some sense, a Sidon set is closer to resembling a $k$-cover if its exclude distribution has much local symmetry. 
We formalize this notion in the following definition.

\begin{definition}
    Let $S$ be a Sidon set in $\F_2^n$.
    Let $X$ and $Y$ be disjoint subsets of $\F_2^n \setminus S$ of the same size.  
    If there exists a permutation $\pi \colon X \to Y$ such that 
    $\restr{d_S}{X} = \restr{d_S}{Y} \circ \pi$, 
    we say that $d_S$ is \textbf{locally equivalent} at $X$ and $Y$.
\end{definition}

Hence, $k$-covers are those Sidon sets whose exclude distribution is locally equivalent at any two equally-sized subsets of their complement.
Hence, the exclude distribution of a $k$-cover $S$ is locally equivalent at any two elements of an equally-sized partition (a partition consisting of elements of the same size) $\mathcal{P}$ of some set $X \subseteq \F_2^n \setminus S$.
We generalize this notion with the following definition.

\begin{definition}
    Let $S$ be a Sidon set in $\F_2^n$.
    If $\mathcal P$ is an equally-sized partition of some set $X \subseteq \F_2^n \setminus S$, then we call $d_S$ \textbf{uniform} on $\mathcal{P}$ if $d_S$ is locally equivalent at any two distinct elements of $\mathcal{P}$.
\end{definition}

\begin{example}
    Suppose $S \subseteq \F_2^n$ is a $k$-cover.
    Let $X \subseteq \F_2^n \setminus S$, and let $\mathcal{P}$ be an equally-sized partition of $X$.
    Then by \Cref{prop:kcovers-constantDist-classification}, $d_S$ is locally equivalent at any two elements of $\mathcal P$, implying $d_S$ is uniform on $\mathcal{P}$.
\end{example}

\begin{example}
    Consider the Sidon set pictured in \Cref{fig:5point-Sidonset} and call it $S$.
    Let $X$ be the highlighted region pictured in \Cref{fig:5point-Sidonset}.
    Notice that $X$ is the union of $6$ distinct $4$-flats (or $4$-dimensional affine subspaces), and let $P_1, \dots, P_6$ be these $4$-flats.
    It is then immediately clear that $d_S$ is locally equivalent at any two of these $4$-flats.
    Therefore, $d_S$ is uniform on $\set{P_1, \dots, P_6}$.
    \begin{figure}[ht!]
        \centering
        \includegraphics[width=0.3\textwidth]{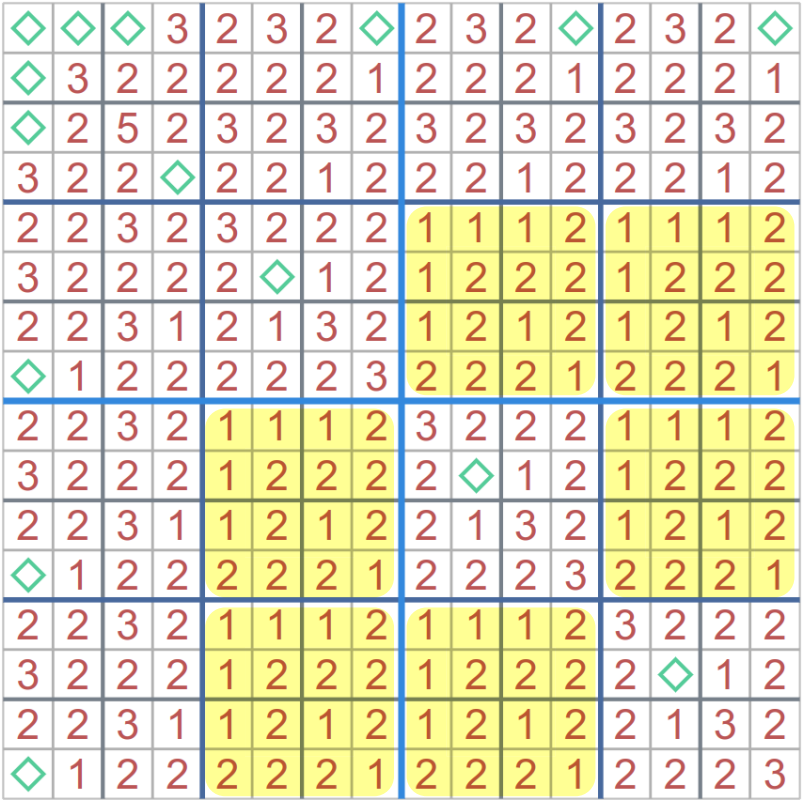}
        \caption{A Sidon set in $\F_2^8$ whose exclude distribution is uniform on $6$ distinct $4$-flats (or $4$-dimensional affine subspaces).}
        \label{fig:5point-Sidonset}
    \end{figure}
\end{example}

Clearly, for any Sidon set $S \subseteq \F_2^n$ and $e_{\min}(S) \leq k \leq e_{\max}(S)$, the exclude distribution of $S$ is uniform on the partition consisting of singleton sets containing points of exclude multiplicity $k$, with respect to $S$.
However, we will only study the cases where the exclude distribution of a Sidon set is uniform on an equally-sized partition of a large set.
We construct such Sidon sets in \Cref{sec:uniform-exclude-distribution}.

\subsection{Equivalence of exclude distributions}
The exclude distribution of two distinct Sidon sets can also be used as an invariant for affine equivalence. 
We call two subsets $S,S' \subseteq \F_2^n$ \textbf{affinely equivalent} if there is an affine permutation $\mathcal{A}$ of $\F_2^n$ such that $\mathcal{A}(S) = S'$.
Sidon sets of size of less than or equal to $9$ are classified up to affine equivalence \cite{quadspaper}.
However, determining whether two Sidon sets are affinely equivalent or not is a difficult problem, in general.
Hence, invariants help determine the affine equivalence of different Sidon sets.

\begin{definition}
    Let $S$ be a Sidon set in $\F_2^n$. If $S' \subseteq \F_2^n$ is a Sidon set, we say that $S$ and $S'$ are \textbf{exclude distribution equivalent} (ED-equivalent) if there exists a permutation $\sigma \colon \F_2^n \setminus S \to \F_2^n \setminus S'$ such that $d_S = d_{S'} \circ \sigma$.
\end{definition}

So, ED-equivalence considers the exclude distributions of two Sidon sets $S$ and $S'$ to be equivalent if and only if the number of points with exclude multiplicity $k$ with respect to $S$ is equal to the number of such points with respect to $S'$.
Equivalently, $S$ and $S'$ are ED-equivalent if and only if $|d^{-1}_S(\set{k})| =|d^{-1}_{S'}(\set{k})|$ for all $k \geq 0$.

\begin{remark}
    Since AB functions are those whose graph is a $(\frac{2^n -2}{6})$-cover, the graphs of any two AB functions (with the same dimension) are ED-equivalent.
    More generally, it follows from \Cref{prop:kcovers-constantDist-classification} that all $k$-covers in the same dimension are ED-equivalent.
\end{remark}

Now, we prove that ED-equivalence of Sidon sets is an invariant of affine equivalence, that is, we show that if two Sidon sets are affinely equivalent, then they must be ED-equivalent.

\begin{proposition}\label{prop:affineequiv-implies-edequiv}
    Let $S, S' \subseteq \F_2^n$ be Sidon sets.
    If $S$ and $S'$ are affinely equivalent, then $S$ and $S'$ are ED-equivalent.
\end{proposition}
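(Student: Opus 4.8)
The plan is to show that an affine equivalence $\mathcal{A}$ of $\F_2^n$ carrying $S$ to $S'$ induces, by restriction, a bijection $\F_2^n \setminus S \to \F_2^n \setminus S'$ that matches up exclude multiplicities point by point; this is in fact stronger than ED-equivalence, which only asks that the multiplicities match up as a multiset. So the real content is that $\mathcal{A}$ preserves exclude multiplicity.

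First I would write $\mathcal{A}(x) = L(x) + c$ for a linear permutation $L$ of $\F_2^n$ and a constant $c \in \F_2^n$, and recall that $\mathcal{A}(S) = S'$. Since $\mathcal{A}$ is a bijection of $\F_2^n$ taking $S$ onto $S'$, it takes $\F_2^n \setminus S$ bijectively onto $\F_2^n \setminus S'$; call this restriction $\sigma$. The key step is the claim that $\mult_{S'}(\mathcal{A}(x)) = \mult_S(x)$ for every $x \in \F_2^n \setminus S$. To see this, observe that $a + b + d = x$ with $a, b, d \in S$ pairwise distinct if and only if $\mathcal{A}(a) + \mathcal{A}(b) + \mathcal{A}(d) = L(a+b+d) + 3c = L(x) + c = \mathcal{A}(x)$, using that $3c = c$ in characteristic $2$ and that $L$ is linear. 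Moreover $a \mapsto \mathcal{A}(a)$ is a bijection $S \to S'$ preserving distinctness, so it gives a bijection between the set of pairwise-distinct triples in $S$ summing to $x$ and the set of pairwise-distinct triples in $S'$ summing to $\mathcal{A}(x)$. Hence the two multiplicities agree. (One should note the harmless fact that the unordered triple is what matters, and $\mathcal{A}$ being injective sends distinct triples to distinct triples.)

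From the claim, $d_S = d_{S'} \circ \sigma$ holds with $\sigma = \restr{\mathcal{A}}{\F_2^n \setminus S}$, which is exactly the definition of ED-equivalence, so we are done. I would also remark in passing that this simultaneously shows $\exc{S}$ and $\exc{S'}$ correspond under $\mathcal{A}$, since a point has positive multiplicity on one side exactly when its image does.

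I do not expect a genuine obstacle here; the only thing to be careful about is the bookkeeping with triples — specifically, making sure the correspondence of triples is a genuine bijection (injectivity of $\mathcal{A}$ handles this) and that the "$3c = c$" simplification is invoked cleanly so the affine shift does not interfere with the sum condition. The step that carries all the weight is the triple-preservation computation $\mathcal{A}(a) + \mathcal{A}(b) + \mathcal{A}(d) = \mathcal{A}(a+b+d)$, valid precisely because we are in characteristic $2$ and summing an odd number of points.
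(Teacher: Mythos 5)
Your proposal is correct and follows essentially the same route as the paper: both rest on the identity $\mathcal{A}(a)+\mathcal{A}(b)+\mathcal{A}(d)=\mathcal{A}(a+b+d)$ for an affine permutation in characteristic $2$ (the $3c=c$ cancellation), which shows $\mathcal{A}$ induces a bijection on pairwise-distinct triples summing to a given point and hence preserves exclude multiplicity. Your single bijection-of-triples argument is a slightly tidier packaging of the paper's two-case analysis (multiplicity zero versus positive, with inequalities in both directions via $\mathcal{A}$ and $\mathcal{A}^{-1}$), but the underlying idea is identical.
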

\begin{proof}
    Throughout this proof, let $[m] = \set{1, \dots, m}$ for any $m \in \N$.
    Suppose that $S$ and $S'$ are affinely equivalent.
    Then there exists an affine permutation $\mathcal{A} \colon \F_2^n \to \F_2^n$ such that $\mathcal{A}(S) = S'$.
    Clearly, if $|S| = |S'| < 3$, then exclude points in both $\F_2^n \setminus S$ and $\F_2^n \setminus S'$ all have multiplicity $0$, implying that $d_S = d_{S'} \circ \sigma$ where $\sigma \colon \F_2^n \setminus S \to \F_2^n \setminus S'$ is any permutation.
    Hence $S$ and $S'$ are ED-equivalent if $|S| = |S'| < 3$.
    Suppose $|S| \geq 3$.
    Let $x \in \F_2^n \setminus S$, and let $k = \mult_S(x)$.
    \begin{description}
        \item[Case 1] Suppose $k = 0$.
        Then $a_1+a_2+a_3 \neq x$ for all pairwise distinct $a_1,a_2,a_3 \in S$.
        Since $\mathcal{A}$ is a permutation, we know that $\mathcal{A}(a_1 + a_2 + a_3) = \mathcal{A}(x)$ if and only if $a_1 + a_2 + a_3 = x$ for all $a_1, a_2 a_3 \in S$.
        So $\mathcal{A}(a_1) + \mathcal{A}(a_2) + \mathcal{A}(a_3) = \mathcal{A}(a_1+a_2+a_3) \neq \mathcal{A}(x)$ for all pairwise distinct $a_1,a_2,a_3 \in S$.
        Therefore, no three pairwise distinct points in $S' = \mathcal{A}(S)$ sum to $\mathcal{A}(x)$, implying that $\mult_{S'}(\mathcal{A}(x)) = 0$.
        Thus, $d_S = d_{S'} \circ \mathcal{A}$.
        \item[Case 2] Suppose $k > 0$.
        Then, there exist pairwise distinct points $a_1, \dots, a_{3k} \in S$ such that $x = a_i + a_{2i} + a_{3i}$ for all $i \in [k]$.
        Hence $\mathcal{A}(x) =\mathcal{A}(a_i + a_{2i} + a_{3i}) = \mathcal{A}(a_i) + \mathcal{A}(a_{2i}) + \mathcal{A}(a_{3i})$
        for all $i \in [k]$, so $\mult_S(x) \leq \mult_{S'}(\mathcal{A}(x))$.
        By using a similar argument 
        and using the fact that $\mathcal{A}^{-1}$ is affine, we have $\mult_S(x) \geq \mult_{S'}(\mathcal{A}(x))$.
        Therefore $\mult_S(x) = \mult_{S'}(\mathcal{A}(x))$, so $d_S = d_{S'} \circ \mathcal{A}$.
    \end{description}
    Thus, $S$ and $S'$ are ED-equivalent.
\end{proof}

So, ED-equivalence is an affine invariant of Sidon sets.
Note that this implies maximality is preserved by affine equivalence. 
However, ED-equivalence is not a complete invariant of Sidon sets, i.e. there exist Sidon sets that are ED-equivalent but not affinely equivalent.
To see this, we use a result of Dempwolff, but first, we recall the following definition.
Two power functions $F(x)= x^d$ and $F'(x) = x^{d'}$ over $\F_{2^n}$ are called \textbf{cyclotomic equivalent} if there exists $0 \leq i < n$ such that $d \equiv 2^i \cdot d' \mod 2^n - 1$ or, $d \equiv 2^i \cdot d^{-1} \mod 2^n - 1$ when $\gcd(d,2^n -1) = 1$.
Dempwolff proved in \cite{dempwolffCCZCyclotomic} that two APN power functions are CCZ-equivalent if and only if they are cyclotomic equivalent. 
We use this result in the following remark.

\begin{remark}
    Let $F \colon \F_{2^5} \to \F_{2^5}$ be defined by $F(x) = x^3$ for all $x \in \F_{2^5}$, and let $F' \colon \F_{2^5} \to \F_{2^5}$ be defined by $F'(x) = x^7$ for all $x \in \F_{2^5}$.
    Notice that $F$ is a Gold function and $F'$ is a Welch function (see \Cref{tab:ABinf-families}).
    Since $n$ is odd, both $F$ and $F'$ are AB.
    Therefore, $\graph F$ and $\graph{F'}$ are ED-equivalent because AB functions have constant exclude distributions with constant value $\frac{2^n -2}{6}$.
    Notice that, by definition, $\graph F$ and $\graph{F'}$ are affinely equivalent if and only if $F$ and $F'$ are CCZ-equivalent.
    So, it remains to show that $3 \not \equiv 2^i \cdot 7^{-1} \mod 31$ for all $0 \leq i < 5$ because CCZ-equivalence and cyclotomic equivalence are the same for APN power functions.
    First, notice that $7 \cdot 9 = 63 \equiv 1 \mod 31$, so $7^{-1} = 9$ over $\Z_{31}$.
    Now, we compute $2^i \cdot 7^{-1} \equiv 2^i \cdot 9 \mod 31$ for all $0 \leq i < 5$: 
    \begin{align*}
        2^0 \cdot 9 &\equiv 9 \mod 31 \\
        2^1 \cdot 9 &\equiv 18 \mod 31 \\
        2^2 \cdot 9 &\equiv 5 \mod 31 \\
        2^3 \cdot 9 &\equiv 10 \mod 31 \\
        2^4 \cdot 9 &\equiv 20 \mod 31.
    \end{align*}
    Thus, $3 \not \equiv 2^i \cdot 7^{-1} \mod 31$ for all $0 \leq i < 5$, and so $F$ and $F'$ are not CCZ-equivalent, implying that $\graph F$ and $\graph{F'}$ are not affinely equivalent.
    Thus, affine equivalence of Sidon sets is strictly more general than ED-equivalence.
\end{remark}

\section{The exclude distribution of \texorpdfstring{$\graph F$}{}}\label{sec:exclude-dist-of-graphs}

In this section, we study graphs of APN functions and their exclude distributions.
As previously mentioned, it is conjectured that $\graph F$ is maximal for any APN function $F \colon \F_2^n \to \F_2^n$.
In this section, we prove that the difference between the maximal and minimal values that $d_{\graph F}$ takes is at most $\frac{2^n - 2}{6}$, then $\graph F$ is maximal.
Furthermore, we prove that the graph of any APN plateaued function $F \colon \F_2^n\to \F_2^n$ whose component functions are unbalanced is uniform on $\mathcal{Q}(\F_2^n, F)$.
This result highlights a very strong regularity in the exclude multiplicities of $\graph F$.
Moreover, we will see in \Cref{sec:applicationGoldKasami} that this main result allows us to compute the exact values that $d_{\graph F}$ takes and precisely how many times it takes those values when $F$ is a Gold function or Kasami function.

\subsection{The maximal Sidon set conjecture for APN functions}\label{sec:maximalAPNConj}

Recall that a function $F \colon \F_2^n \to \F_2^n$ is APN if and only if its graph $\graph F$ is a Sidon set. 
It has been shown that the graphs of all APN power functions and APN plateaued functions have graphs that are maximal Sidon sets \cite{budaghyanCarletHellesetUpperBoundsDegree}.

To prove maximality of a Sidon set, one can also consider the difference between its minimal and maximal exclude multiplicities.
This is because \Cref{prop:sumMults-equal-choose3-improved} provides a relation that involves the size of the Sidon set $S$, $e_{\min}(S)$ and $e_{\max}(S)$, and also the number of $0$-points in $\F_2^n \setminus S$.
Informally speaking, if the difference between the minimal and maximal exclude multiplicities is small enough, then the Sidon set is ``dense'' which implies that it is maximal.

\begin{proposition}\label{prop:emax-emin-bound-implies-maximal}
    Suppose $n >1$ and $S \subseteq \F_2^{2n}$ is a Sidon set of size $2^n$.
    If 
    \begin{equation}\label{eqn:eMax-eMin-Leq-2n-2over6}
        e_{\max}(S) - e_{\min}(S) \leq \frac{2^n - 2}{6},
    \end{equation}
    then $S$ is maximal.
\end{proposition}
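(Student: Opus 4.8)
The plan is to argue by contradiction from the two-sided inequality of \Cref{prop:sumMults-equal-choose3-improved}, applied with ambient space $\F_2^{2n}$. Write $s = |S| = 2^n$, so that the complement $\F_2^{2n}\setminus S$ has exactly $2^{2n}-2^n = 2^n(2^n-1)$ points. The computational heart of the argument is the identity
\[
    \frac{\binom{s}{3}}{2^{2n}-s} = \frac{2^n(2^n-1)(2^n-2)/6}{2^n(2^n-1)} = \frac{2^n-2}{6},
\]
that is, the average exclude multiplicity over the complement is exactly the ``AB value'' $\frac{2^n-2}{6}$ appearing in \Cref{thm:AB-vanDamFlaass}. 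In particular, the left-hand inequality of \cref{eq:inequality-excpts} already forces $e_{\min}(S) \le \frac{2^n-2}{6}$, so the hypothesis \cref{eqn:eMax-eMin-Leq-2n-2over6} is really a constraint on $e_{\max}(S)$ once $e_{\min}(S)$ is pinned down.

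Then I would suppose $S$ is not maximal and derive a contradiction. Non-maximality means $\exc S \subsetneq \F_2^{2n}\setminus S$, so the number $z$ of multiplicity-$0$ points in $\F_2^{2n}\setminus S$ satisfies $z \ge 1$; in particular $e_{\min}(S) = 0$, and hence \cref{eqn:eMax-eMin-Leq-2n-2over6} gives $e_{\max}(S) \le \frac{2^n-2}{6}$. Substituting this into the right-hand inequality of \cref{eq:inequality-excpts} and using $0 \le 2^{2n}-s-z$,
\[
    (2^{2n}-s)\cdot\frac{2^n-2}{6} = \binom{s}{3} \le (2^{2n}-s-z)\,e_{\max}(S) \le (2^{2n}-s-z)\cdot\frac{2^n-2}{6}.
\]
Since $n > 1$ we have $\frac{2^n-2}{6} > 0$, so cancelling it yields $2^{2n}-s \le 2^{2n}-s-z$, i.e. $z \le 0$, contradicting $z \ge 1$. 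Hence $S$ is maximal.

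I do not expect a genuine obstacle here: the only points needing care are that $\frac{2^n-2}{6}$ is strictly positive (true for $n \ge 2$, hence for $n > 1$) so the final cancellation is legitimate, and that $2^{2n}-s-z \ge 0$ (immediate, since the $z$ zero-multiplicity points form a subset of the $2^{2n}-s$ points outside $S$). If one prefers a direct rather than contrapositive phrasing, the same two displays combine to give $z \cdot \frac{2^n-2}{6} \le 0$ and hence $z = 0$, which is precisely the statement that $\exc S = \F_2^{2n}\setminus S$, i.e. that $S$ is maximal.
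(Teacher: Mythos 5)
Your proof is correct and follows essentially the same route as the paper: assume non-maximality, deduce $e_{\min}(S)=0$ and hence $e_{\max}(S)\leq\frac{2^n-2}{6}$, then feed this into the right-hand inequality of \Cref{prop:sumMults-equal-choose3-improved} to reach a contradiction. The only cosmetic difference is that you keep $z$ general and conclude $z\leq 0$, while the paper substitutes $z\geq 1$ directly and arrives at $2^{2n}-2^n\leq 2^{2n}-2^n-1$; these are the same computation.
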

\begin{proof}
    By way of contradiction, suppose $S$ is not maximal.
    Then implies $S$ has an exclude point of multiplicity $0$, so $e_{\min}(S) = 0$.
    Hence, $e_{\max}(S) \leq \frac{2^n-2}{6}$.
    By \Cref{prop:sumMults-equal-choose3-improved}, we have the inequality $\binom{2^n}{3} \leq (2^{2n} - 2^n - 1) e_{\max}(S)$, and since $e_{\max}(S) \leq \frac{2^n-2}{6}$, we have 
    \begin{align*}
        \binom{2^n}{3} \leq (2^{2n} - 2^n - 1) \frac{2^n - 2}{6}.
    \end{align*}
    Observe that this equation is equivalent to 
    \begin{align*}
        \frac{2^n (2^n -1) (2^n -2)}{6} &\leq (2^{2n} - 2^n -1)  \frac{2^n - 2}{6}.
    \end{align*}
    Hence, $2^{2n} - 2^n = 2^n (2^n -1) \leq 2^{2n} - 2^n -1$, a contradiction.
    Thus, $S$ is maximal.
\end{proof}

The converse of \Cref{prop:emax-emin-bound-implies-maximal} does not hold in general, that is, there exist $n \in \N$ and a maximal Sidon set of size $2^n$ in $\F_2^{2n}$ whose exclude distribution takes values varying by more than $\frac{2^n - 2}{6}$.
Also, observe that \Cref{prop:emax-emin-bound-implies-maximal} describes a condition that implies maximality for Sidon sets of size $2^n$ in $\F_2^{2n}$, we can apply this result to the graphs of APN functions as $\F_2^{2n}$ is additively isomorphic to $(\F_2^n)^2$. 
Despite this, an APN function whose graph has an exclude distribution that takes values varying by more than $\frac{2^n-2}{6}$ has yet to be found.

It has been known since \cite{vandamflass}, and perhaps earlier, that sums of subsets of size $3$ of $\graph F$ (i.e. exclude points) are related to the Walsh transform.
The following was shown in \cite[Proof of Cor. 3.2]{carlet_apnGraphMaximal}, and we take it to be a lemma.

\begin{lemma}\label{lem:exclude-mult-Walsh}
    \textup{\cite{carlet_apnGraphMaximal}}
    Let $F \colon \F_2^n \to \F_2^n$ be an APN function.
    Then $\sum_{(u,v) \in (\F_2^n)^2} (-1)^{v \cdot b + u \cdot a} W_F^3(u,v)$ equals
    \[
     2^{2n} |\set{(x_1,x_2,x_3) \in (\F_2^n)^3 : (x_1 +x_2 + x_3, F(x_1) + F(x_2) + F(x_3)) = (a,b)}|.
    \]
    for all $(a,b) \in (\F_2^n)^2$.
\end{lemma}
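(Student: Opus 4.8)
The plan is to expand the left-hand side by writing $W_F^3(u,v)$ as a triple sum over $\F_2^n$ and then interchange the order of summation so that the sum over $(u,v)$ is performed first. Concretely, I would start from
\[
    W_F^3(u,v) = \sum_{x_1,x_2,x_3 \in \F_2^n} (-1)^{v \cdot (F(x_1)+F(x_2)+F(x_3)) + u \cdot (x_1+x_2+x_3)},
\]
using bilinearity of the inner product to combine the three exponents. Substituting this into $\sum_{(u,v)} (-1)^{v\cdot b + u \cdot a} W_F^3(u,v)$ and swapping the (finite) sums gives
\[
    \sum_{x_1,x_2,x_3} \ \sum_{(u,v) \in (\F_2^n)^2} (-1)^{v \cdot (F(x_1)+F(x_2)+F(x_3)+b) + u \cdot (x_1+x_2+x_3+a)}.
\]

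The next step is to evaluate the inner sum over $(u,v)$ using the standard orthogonality relation for characters of $\F_2^n$: for any $w \in \F_2^n$, $\sum_{u \in \F_2^n}(-1)^{u\cdot w}$ equals $2^n$ if $w = 0$ and $0$ otherwise. Applying this separately to the sum over $u$ (with $w = x_1+x_2+x_3+a$) and to the sum over $v$ (with $w = F(x_1)+F(x_2)+F(x_3)+b$), the inner double sum collapses to $2^{2n}$ exactly when both $x_1+x_2+x_3 = a$ and $F(x_1)+F(x_2)+F(x_3) = b$, and to $0$ otherwise. Summing the surviving contributions over all $(x_1,x_2,x_3)$ yields $2^{2n}$ times the number of such triples, which is precisely the claimed identity.

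This argument is essentially a two-dimensional Fourier inversion and requires no special properties of $F$ beyond being a function $\F_2^n \to \F_2^n$; in particular the APN hypothesis is not actually needed for the stated equality (it is presumably retained because the lemma is only ever applied to APN functions, where the right-hand side count is what controls the exclude multiplicities via Theorem~\ref{thm:AB-vanDamFlaass}-type reasoning). The only point requiring a little care — the ``main obstacle,'' such as it is — is the bookkeeping in the exponent when cubing the Walsh transform: one must introduce three independent summation variables $x_1,x_2,x_3$ and correctly collect the linear-in-$u$ and linear-in-$v$ parts before invoking orthogonality. Once the exponents are organized as $u\cdot(x_1+x_2+x_3+a) + v\cdot(F(x_1)+F(x_2)+F(x_3)+b)$, the rest is the routine character-orthogonality computation sketched above.
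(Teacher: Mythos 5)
Your proof is correct and is the standard character-orthogonality (Fourier inversion) argument: expand $W_F^3(u,v)$ as a triple sum, interchange summation, and collapse the inner sum over $(u,v)$ to $2^{2n}$ exactly on the triples satisfying $(x_1+x_2+x_3,F(x_1)+F(x_2)+F(x_3))=(a,b)$. The paper does not reprove this lemma but cites Carlet's proof, which proceeds in exactly this way, and your side remark that the APN hypothesis plays no role in the identity itself is also accurate.
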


This allows us to draw a direct connection to the exclude distribution of the graph of an APN function and the Walsh transform. 

\begin{proposition}
\label{prop:excludeMult-InTermsOf-Walsh}
    Let $F \colon \F_2^n \to \F_2^n$ be an APN function.
    If $(a,b) \in (\F_2^n)^2 \setminus \graph F$, then 
    \begin{equation}\label{eq:excludeMult-Walsh}
        d_{\graph F}(a,b) = \frac{1}{3 \cdot 2^{2n+1}} \sum_{(u,v) \in (\F_2^n)^2} (-1)^{v \cdot b + u \cdot a} W_F^3(u,v).
    \end{equation}
\end{proposition}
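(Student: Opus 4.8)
The plan is to derive \Cref{eq:excludeMult-Walsh} directly from \Cref{lem:exclude-mult-Walsh} by counting solutions to the system in two different ways: via triples of distinct points (which gives exclude multiplicities) and via arbitrary triples (which is what the lemma counts). The key observation is that the set of all triples $(x_1,x_2,x_3)$ with $x_1+x_2+x_3 = a$ and $F(x_1)+F(x_2)+F(x_3) = b$, for a fixed $(a,b) \notin \graph F$, splits according to how many of the $x_i$ coincide; since $(a,b) \notin \graph F$, no solution can have all three coordinates equal (that would force $(a,b) = (x_1, F(x_1)) \in \graph F$), so the only contributions come from triples with all three distinct and triples with exactly two equal.

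First I would handle the degenerate triples. Suppose exactly two of the $x_i$ coincide, say $x_1 = x_2 \neq x_3$; then the system becomes $x_3 = a$ and $F(x_3) = b$, i.e. $(a,b) = (a, F(a))$, contradicting $(a,b) \notin \graph F$. Hence for $(a,b) \notin \graph F$ there are no solutions with a repeated coordinate, and every solution consists of three pairwise distinct points. Next I would count these: each unordered triple $\{a_1,a_2,a_3\}$ of pairwise distinct points of $\graph F$ with $a_1 + a_2 + a_3 = (a,b)$ corresponds to exactly $3! = 6$ ordered solutions $(x_1,x_2,x_3) \in (\F_2^n)^3$ of the system (here I am using that a point of $\graph F$ is determined by its first coordinate, so a triple of distinct points of $\graph F$ summing to $(a,b)$ is the same data as an ordered/unordered triple $x_1,x_2,x_3 \in \F_2^n$ whose images sum appropriately, with the distinctness of the $x_i$ equivalent to the distinctness of the graph points). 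By the definition of $d_{\graph F}$ and $\mult_{\graph F}$, the number of such unordered triples is exactly $d_{\graph F}(a,b)$.

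Combining these two steps, the quantity $|\{(x_1,x_2,x_3) : (x_1+x_2+x_3, F(x_1)+F(x_2)+F(x_3)) = (a,b)\}|$ in \Cref{lem:exclude-mult-Walsh} equals $6 \, d_{\graph F}(a,b)$. Substituting into the lemma gives $\sum_{(u,v)} (-1)^{v\cdot b + u \cdot a} W_F^3(u,v) = 2^{2n} \cdot 6 \, d_{\graph F}(a,b) = 3 \cdot 2^{2n+1} d_{\graph F}(a,b)$, and dividing through by $3 \cdot 2^{2n+1}$ yields \cref{eq:excludeMult-Walsh}. The only real point requiring care — the ``main obstacle,'' though it is more of a bookkeeping subtlety than a genuine difficulty — is the bijection between ordered solution triples of the system and ordered triples of distinct points of $\graph F$, and correctly accounting for the factor of $6$; I would state explicitly that distinctness of $x_1,x_2,x_3$ is equivalent to distinctness of $(x_1,F(x_1)),(x_2,F(x_2)),(x_3,F(x_3))$ (since the first coordinates already distinguish them), and that passing from ordered to unordered triples divides the count by $3!$, so that $6 \mid \sum_{(u,v)} (-1)^{v\cdot b + u\cdot a} W_F^3(u,v)$ as an automatic consequence.
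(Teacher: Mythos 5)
Your proposal is correct and follows essentially the same route as the paper: both arguments observe that for $(a,b)\notin\graph F$ no solution triple can have a repeated coordinate, identify the ordered-solution count of \Cref{lem:exclude-mult-Walsh} with $6\,d_{\graph F}(a,b)$, and substitute. You simply spell out the degenerate cases and the factor of $3!$ in more detail than the paper does.
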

\begin{proof}
    Let $(a,b) \in (\F_2^n)^2 \setminus \graph F$.
    Since $b \neq F(a)$, we know that there does not exist $(x,y,z) \in (\F_2^n)^3$ such that $\set{x,y,z} < 3$ and $(x+y+z, F(x)+F(y)+F(z)) = (a,b)$.
    Hence
    \begin{align*}
        d_{\graph F}(a,b) &= \frac{1}{6} |\set{(x,y,z) \in (\F_2^n)^3 : |\set{x, y, z}|= 3, (x +y + z, F(x) + F(y) + F(z)) = (a,b)}| \\
        &= \frac{1}{6} |\set{(x,y,z) \in (\F_2^n)^3 : (x +y + z, F(x) + F(y) + F(z)) = (a,b)}|.
    \end{align*}
    By applying \Cref{lem:exclude-mult-Walsh}, we have \cref{eq:excludeMult-Walsh}.
\end{proof}

In general, the exclude distribution of a Sidon set does not have such a closed form, and so the importance of \Cref{prop:excludeMult-InTermsOf-Walsh} is not to be understated. 
Carlet used this to show that the graph of APN function $F$ is maximal if and only if for all $(a,b) \in (\F_2^n)^2$, the inequality $\sum_{(u,v) \in (\F_2^n)^2} (-1)^{v \cdot b + u \cdot a} W_F^3(u,v) \neq 0$ holds.

For a function $F \colon \F_2^n \to \F_2^n$, its graph $\graph F$ has size $2^n$.
So if $F$ is APN and $e_{\max}(\graph F) - e_{\min}(\graph F) \leq \frac{2^n-2}{6}$, then $\graph F$ is a maximal Sidon set by \Cref{prop:emax-emin-bound-implies-maximal}.
However, we can now describe this in terms of the Walsh transform.

\begin{proposition}
    \label{prop:WalshBound-ImpliesMaximal}
     Suppose $n >1$, and suppose $F \colon \F_2^n \to \F_2^n$ is an APN function.
     If 
     \begin{equation}\label{eqn:WalshDiferences-Imply-Maximal}
     \left|
     \sum_{
     \substack{
        (u,v) \in (\F_2^n)^2 
        \\ u\cdot (a+c) \neq v \cdot (b+d)} }
     (-1)^{v \cdot b + u \cdot a} W_F^3(u,v) 
     \right|
     \leq 
    2^{3n-1} - 2^{2n},
    \end{equation}
    holds for all $(a,b), (c,d) \in (\F_2^n)^2 \setminus \graph F$, 
    then $\graph F$ is maximal.
\end{proposition}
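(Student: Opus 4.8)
The plan is to derive the conclusion from \Cref{prop:emax-emin-bound-implies-maximal}: it suffices to show that \cref{eqn:WalshDiferences-Imply-Maximal} forces $e_{\max}(\graph F) - e_{\min}(\graph F) \leq \frac{2^n-2}{6}$, since $\graph F$ is a Sidon set of size $2^n$ in $(\F_2^n)^2 \cong \F_2^{2n}$. So I would first pick points $(a,b),(c,d) \in (\F_2^n)^2 \setminus \graph F$ at which $d_{\graph F}$ attains its maximum and minimum respectively, and compute the difference $d_{\graph F}(a,b) - d_{\graph F}(c,d)$ using the closed form from \Cref{prop:excludeMult-InTermsOf-Walsh}.

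The key computation is
\[
    d_{\graph F}(a,b) - d_{\graph F}(c,d) = \frac{1}{3 \cdot 2^{2n+1}} \sum_{(u,v) \in (\F_2^n)^2} \left( (-1)^{v \cdot b + u \cdot a} - (-1)^{v \cdot d + u \cdot c} \right) W_F^3(u,v).
\]
For a fixed term, the bracket $(-1)^{v\cdot b + u \cdot a} - (-1)^{v \cdot d + u \cdot c}$ equals $0$ when $v \cdot b + u \cdot a = v \cdot d + u \cdot c$, i.e. when $u \cdot (a+c) = v \cdot (b+d)$, and equals $\pm 2$ otherwise, i.e. exactly when $u \cdot (a+c) \neq v \cdot (b+d)$. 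Therefore the sum telescopes down to a sum over the index set appearing in \cref{eqn:WalshDiferences-Imply-Maximal}, and in absolute value
\[
    \left| d_{\graph F}(a,b) - d_{\graph F}(c,d) \right| \leq \frac{2}{3 \cdot 2^{2n+1}} \left| \sum_{\substack{(u,v) \in (\F_2^n)^2 \\ u \cdot (a+c) \neq v \cdot (b+d)}} (-1)^{v \cdot b + u \cdot a} W_F^3(u,v) \right| \leq \frac{2}{3 \cdot 2^{2n+1}} \left( 2^{3n-1} - 2^{2n} \right).
\]
A direct simplification of the right-hand side gives $\frac{2^{3n-1} - 2^{2n}}{3 \cdot 2^{2n}} = \frac{2^n - 2}{6}$, which is exactly the bound \cref{eqn:eMax-eMin-Leq-2n-2over6} in the hypothesis of \Cref{prop:emax-emin-bound-implies-maximal}. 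Applying that proposition yields maximality of $\graph F$.

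The only subtlety I anticipate is the sign bookkeeping in the bracket argument: one must check carefully that the bracket is nonzero \emph{precisely} on the index set used in the statement (not a sub- or superset of it), and that replacing $(-1)^{v\cdot b + u\cdot a} - (-1)^{v\cdot d + u\cdot c}$ by the single weight $(-1)^{v\cdot b + u\cdot a}$ times $\pm 2$ is legitimate — this is fine because on that index set the two exponents differ by $1 \pmod 2$, so the difference is $\pm 2 \cdot (-1)^{v\cdot b + u\cdot a}$, and pulling the factor $2$ out and bounding the remaining signed sum by the quantity in \cref{eqn:WalshDiferences-Imply-Maximal} is exactly what is needed. Everything else is routine arithmetic, so this is the main (and rather mild) obstacle.
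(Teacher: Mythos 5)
Your proposal is correct and follows essentially the same route as the paper: express the difference $d_{\graph F}(a,b) - d_{\graph F}(c,d)$ via \Cref{prop:excludeMult-InTermsOf-Walsh}, observe that the bracket reduces to $2(-1)^{v\cdot b + u\cdot a}$ exactly on the index set where $u\cdot(a+c) \neq v\cdot(b+d)$, bound the resulting sum by the hypothesis to get $e_{\max}(\graph F) - e_{\min}(\graph F) \leq \frac{2^n-2}{6}$, and conclude by \Cref{prop:emax-emin-bound-implies-maximal}. The only cosmetic slip is writing the bracket as $\pm 2 \cdot (-1)^{v\cdot b + u\cdot a}$ when the sign is in fact determined to be $+2$, but this does not affect the argument since you take absolute values.
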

\begin{proof}
    Suppose \cref{eqn:WalshDiferences-Imply-Maximal} holds.
    Notice that for any $(a,b), (c,d) \in (\F_2^n)^2$ such that $b \neq F(a)$ and $d \neq F(c)$, we have
    \begin{align*}
         d_{\graph F}(a,b) - d_{\graph F}(c,d) 
         &= 
         \frac{1}{3 \cdot2^{2n+1}} \left( 
         \sum_{(u,v) \in (\F_2^n)^2} (-1)^{v \cdot b + u \cdot a} W_F^3(u,v) 
        -  \sum_{(u,v) \in (\F_2^n)^2} (-1)^{v \cdot d + u \cdot c} W_F^3(u,v)
        \right)
    \end{align*}
    by \Cref{prop:excludeMult-InTermsOf-Walsh}.
    By simplifying, we have 
    \begin{align*}
    d_{\graph F}(a,b) - d_{\graph F}(c,d)  
    &= \frac{1}{3 \cdot2^{2n+1}} \sum_{(u,v) \in (\F_2^n)^2} \left( (-1)^{v \cdot b + u \cdot a} - (-1)^{v \cdot d + u \cdot c}\right) W_F^3(u,v)  \\
        &= \frac{1}{3 \cdot2^{2n+1}}  \sum_{\substack{(u,v) \in (\F_2^n)^2 \\ u\cdot (a+c) \neq v \cdot (b+d) }} 
        \left( 
        (-1)^{v \cdot b + u \cdot a} - (-1)^{v \cdot d + u \cdot c}
        \right) W_F^3(u,v) 
    \end{align*}
    for any $(a,b), (c,d) \in (\F_2^n)^2 \setminus \graph F$.
    If $u \cdot (a+c) \neq v \cdot (b+d)$, then $(-1)^{v \cdot b + u \cdot a} - (-1)^{v \cdot d + u \cdot c} = 2 (-1)^{v \cdot b + u \cdot a}$.
    Hence 
    \begin{align*}
        d_{\graph F}(a,b) - d_{\graph F}(c,d) &= \frac{1}{3 \cdot2^{2n}} 
        \sum_{\substack{(u,v) \in (\F_2^n)^2 \\ u\cdot (a+c) \neq v \cdot (b+d) }} 
        (-1)^{v \cdot b + u \cdot a}  W_F^3(u,v)
    \end{align*}
    for any $(a,b), (c,d) \in (\F_2^n)^2 \setminus \graph F$.
    Therefore
    \begin{align*}
        e_{\max}(\graph F) - e_{\min}(\graph F)
        &= 
        \frac{1}{3 \cdot 2^{2n}}
        \max_{\substack{a,b,c,d \in \F_2^n \\ b \neq F(a), d \neq F(c)}}
        \left|
        \sum_{\substack{(u,v) \in (\F_2^n)^2 \\ u\cdot (a+c) \neq v \cdot (b+d) }} 
        (-1)^{v \cdot b + u \cdot a}  W_F^3(u,v)
        \right| \\
        &\leq \frac{2^{3n-1} - 2^{2n}}{3 \cdot 2^{2n}} \\
        &= \frac{2^n - 2}{6}.
    \end{align*}
    Since $e_{\max}(\graph F) - e_{\min}(\graph F) \leq \frac{2^n - 2}{6}$, the graph of $F$ is maximal by \Cref{prop:emax-emin-bound-implies-maximal}.
\end{proof}

\begin{remark}\label{remark:equivalentToLinearHyperplanes}
    Suppose $F \colon \F_2^n \to \F_2^n$ is an APN function.
    In the case of $\graph F$, 
    \Cref{prop:WalshBound-ImpliesMaximal} is equivalent to stating that $\graph F$ is maximal if 
    \[
             \left|
         \sum_{
         (u,v) \in (\F_2^n)^2 \setminus \mathcal{H}
         } 
         (-1)^{v \cdot b + u \cdot a} W_F^3(u,v) 
         \right|
         \leq 
        2^{3n-1} - 2^{2n}
    \]
    for all linear hyperplanes $\mathcal{H}$ of $(\F_2^n)^2$.
\end{remark}

While there are many APN functions whose graphs have exclude points with multiplicity greater than $\frac{2^n-2}{6}$ (e.g. the Dobbertin function when $n =5$), all of our computed examples (mostly low-dimensional examples of power functions and some quadratics) have satisfied the inequalities from \Cref{prop:emax-emin-bound-implies-maximal} and \Cref{prop:WalshBound-ImpliesMaximal}. 
It would be interesting to find a subclass of APN functions that always satisfy this bound on the difference between the maximal and minimal exclude multiplicities of their graphs, and therefore, a subclass of APN functions whose graphs are maximal. 

\subsection{Graphs of APN functions with uniform exclude distributions}\label{sec:uniform-exclude-distribution}
We now discuss APN functions $F \colon \F_2^n \to \F_2^n$ whose graphs have exclude distributions that exhibit nice properties regarding local equivalence and uniformity.
First, we recall an observation by Dillon: for any APN function $F \colon \F_2^n \to \F_2^n$ and any non-zero $c \in \F_2^n$, there exists a solution $(x,y,z) \in (\F_2^n)^3$ to the equation $F(x) + F(y) + F(z) + F(x+y+z) = c$ \cite{carlet_apnGraphMaximal} \cite[p. 381]{CarletBook}.
In \cite{Taniguchi2023}, a generalization of this property was studied, called the \textbf{D-property}.
Using Dillon's observation, we prove \Cref{thm:LocalEquiv-permutation-implies-maximal}.

\begin{proof}[Proof of \Cref{thm:LocalEquiv-permutation-implies-maximal}]
    Suppose that for any $a,\alpha \in \F_2^n$, the exclude distribution of $\graph F$ is locally equivalent at $Q_a(F)$ and $Q_\alpha(F)$ by the permutation $(a,b) \mapsto (\alpha, b + F(a) +  F(\alpha))$.
    To show that $\graph F$ is maximal, it suffices to show that $d_{\graph F}$ takes only non-zero values on $Q_0(F)$.
    Let $b \in \F_2^n$ such that $b \neq 0$.
    By our hypothesis, we know that $d_{\graph F}(0, b) = d_{\graph F}(\alpha, b + F(\alpha))$ for all $\alpha \in \F_2^n$.
    Equivalently, the number of solutions $(x,y,z) \in (\F_2^n)^3$ to
    \begin{align*}
        \begin{cases}
            x + y + z = \alpha \\
            F(x) + F(y) + F(z) = b + F(\alpha)
        \end{cases}
    \end{align*}
    is constant as $\alpha$ ranges over $\F_2^n$.
    This system of equations is the same as $F(x) + F(y) + F(z) + F(x + y + z) = b$, and by Dillon's observation, we know that there exists a solution $(x,y,z) \in (\F_2^n)^3$ to this equation.
    Therefore, $d_{\graph F}(0, b) > 0$, and so $d_{\graph F}$ only takes non-zero values on $Q_0(F)$.
    By applying the uniformity of $d_{\graph F}$ on $\mathcal{Q}(\F_2^n, F)$, it follows that $\graph F$ is maximal.
\end{proof}

We now introduce a very natural partition of $(\F_2^n)^2$.
For any $a \in \F_2^n$, let $P_a$ denote the set $\set{a} \times \F_2^n = \set{(a,b) : b \in \F_2^n}$.
Clearly, $P_{a_1}$ and $P_{a_2}$ are disjoint if and only if $a_1 = a_2$ for all $a_1, a_2 \in \F_2^n$, so $\set{P_a : a \in \F_2^n}$ partitions  $(\F_2^n)^2$.

\begin{figure}[ht!]
    \centering
    \includegraphics[width=0.35\textwidth]{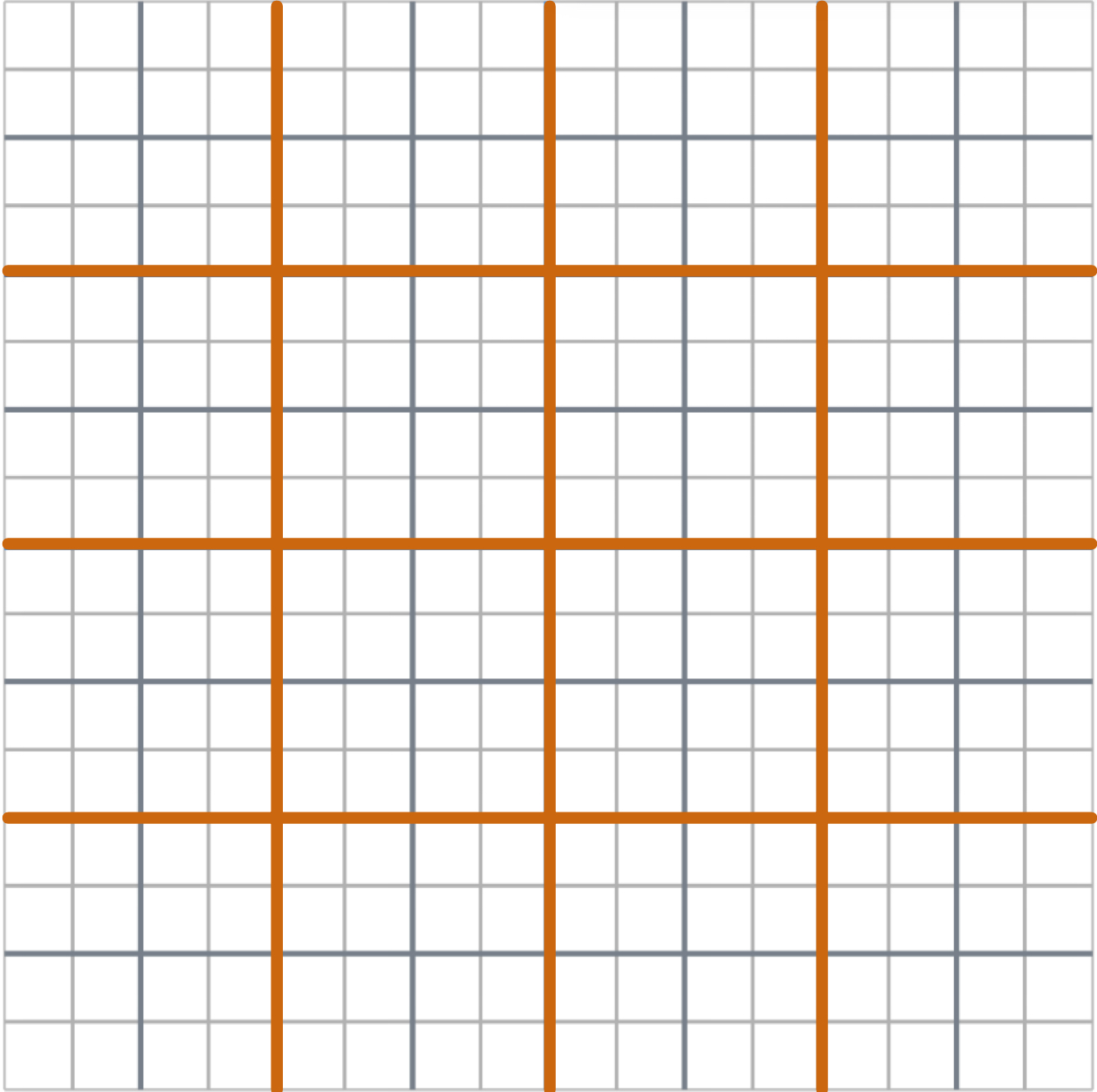}
    \caption{The partition $\set{P_a : a \in \F_2^4}$ of $(\F_2^4)^2$.}
    \label{fig:8dim-grid-partition}
\end{figure}

Notice that if $F \colon \F_2^n \to \F_2^n$ is an APN function, then $(a,b) \in \graph F$ is in $P_\alpha$ if and only if $a = \alpha$.
Hence, there is a natural $1$-to-$1$ correspondence between points in $\graph F$ and the $n$-flats $P_a$.
So, let $Q_a(F)$ be the set $P_a$ with $(a,F(a))$ removed.
Also, let
\begin{align}
    \mathcal{Q}(\F_2^n, F) &= \set{ Q_x(F) : x \in \F_2^n}.
\end{align}
So, $\mathcal{Q}(\F_2^n, F)$ is an equally-sized partition of $(\F_2^n)^2 \setminus \graph F$.

\begin{example}   
    By direct observation of \Cref{fig:goldx3-n4}, we notice that the graph of the Gold function $F(x) = x^3$ over $\F_{2^4}$ has an exclude distribution that is locally equivalent at any $Q_a(F)$ and $Q_\alpha(F)$ for any $a,\alpha \in \F_{2^4}$ since each set $Q_a(F)$ contains $5$ points with exclude multiplicity $1$ and $10$ points with exclude multiplicity $3$.
    In other words, the exclude distribution of $F(x) = x^3$ over $\F_{2^4}$ is uniform on $\mathcal{Q}(\F_{2^4}, F)$.
\end{example}

\begin{figure}[ht!]
    \centering
    \includegraphics[width=0.42\textwidth]{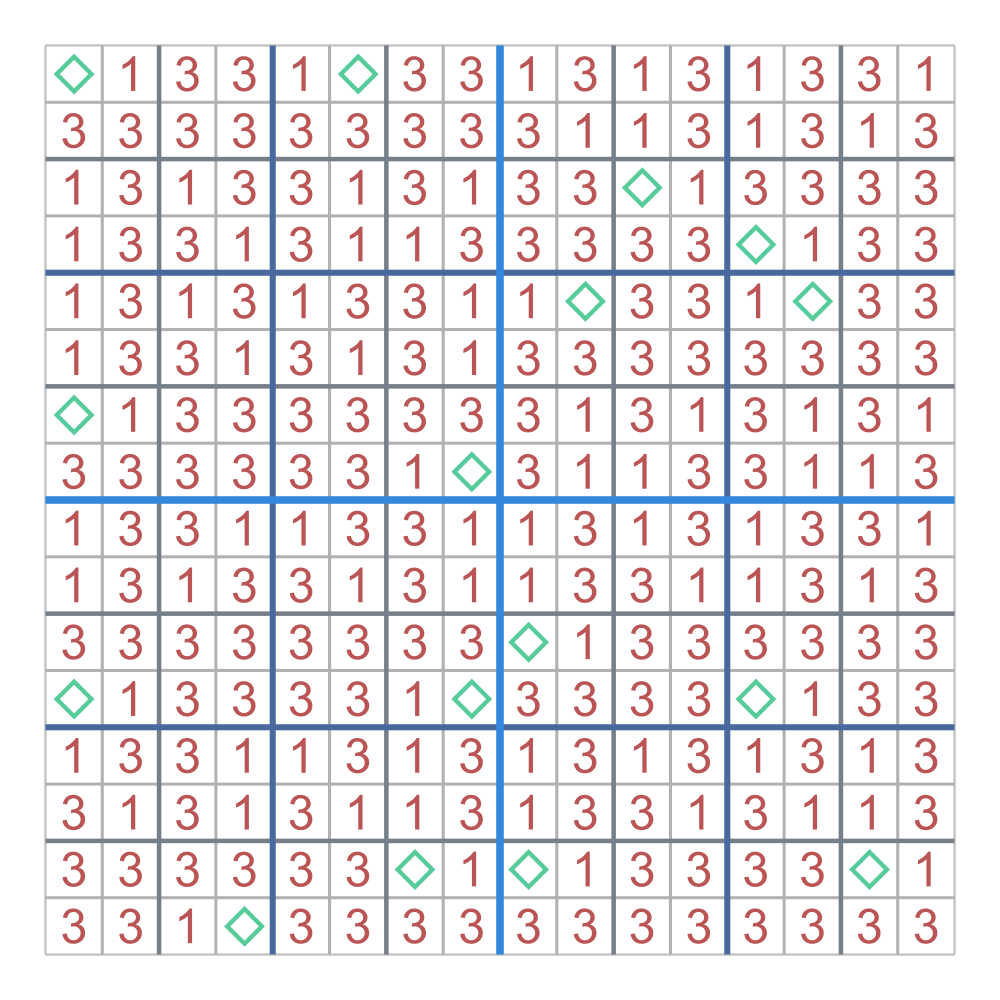}
    \caption{The graph of the Gold function $x \mapsto x^3$ over $\F_{2^4}$.}
    \label{fig:goldx3-n4}
\end{figure}

A natural question to ask which APN functions $F \colon \F_2^n \to \F_2^n$ have the property that $d_{\graph F}$ is uniform on $\mathcal{Q}(\F_2^n, F)$.
Clearly, all AB functions have this property since the exclude distributions of their graphs take constant value.
However, we will soon prove \Cref{thm:APNPlateauedComponent-UniformExcludeDist}, showing that there is a non-trivial family of such APN functions.

Recall that a function $F \colon \F_2^n \to \F_2^n$ is plateaued if and only if, for every $v \in \F_2^n$, there exists $\lambda_v \geq 0$ such that $W_F(u,v) \in \set{0, \pm \lambda_v}$ for all $u \in \F_2^n$.
In the proof Corollary 3 from \cite{carlet_apnGraphMaximal}, it was shown that if $F \colon \F_2^n \to \F_2^n$ is an APN plateaued function whose component functions are all unbalanced, then the following equality holds for every $(a,b) \in (\F_2^n)^2$:
\begin{equation}\label{eqn:APNplateauedfcn-unbalancedcomponenets}
    \sum_{(u,v) \in (\F_2^n)^2} (-1)^{v \cdot b + u \cdot a} W_F^3(u,v) 
    = 2^{2n} |\set{(x,y) \in (\F_2^n)^2 : F(x) + F(y) + F(a) = b}|.
\end{equation}

Originally, this fact was used to prove any APN plateaued function $F \colon \F_2^n \to \F_2^n$ whose component functions are unbalanced satisfies $\im F + \im F = \F_2^n$.
However, we use this fact to show that all APN plateaued functions $F$ whose component functions are unbalanced have graphs whose exclude distributions are uniform on $\mathcal{Q}(\F_2^n, F)$.

\begin{proof}[Proof of \Cref{thm:APNPlateauedComponent-UniformExcludeDist}]
    Suppose that $F$ is an APN plateaued function whose component functions are all unbalanced.
    Then by \Cref{prop:excludeMult-InTermsOf-Walsh} and \cref{eqn:APNplateauedfcn-unbalancedcomponenets}, for any $(a,b) \in (\F_2^n)^2 \setminus \graph F$ we have 
    \begin{align*}
        d_{\graph F}(a,b) &= \frac{1}{3 \cdot 2^{2n+1}} \sum_{(u,v) \in \F_2^n} (-1)^{v \cdot b + u \cdot a} W_F^3(u,v) \\
        &= \frac{1}{6} |\set{(x,y) \in (\F_2^n)^2 : F(x) + F(y) + F(a) = b}|.
    \end{align*}
    Let $a,\alpha, b \in \F_2^n$ such that $b \neq F(a)$, and set $\beta = b + F(a) + F(\alpha)$.
    Then $\beta \neq F(\alpha)$, so $(\alpha, \beta) \notin \graph F$.
    Therefore
    \begin{align*}
        d_{\graph F}(a,b) &= \frac{1}{6} |\set{(x,y) \in (\F_2^n)^2 : F(x) + F(y) + F(a) = b}| \\
        &= \frac{1}{6} |\set{(x,y) \in (\F_2^n)^2 : F(x) + F(y) + F(\alpha) = b +F(a) + F(\alpha)}| \\
        &=  \frac{1}{6} |\set{(x,y) \in (\F_2^n)^2 : F(x) + F(y) + F(\alpha) = \beta}| \\
        &= d_{\graph F}(\alpha, \beta).
    \end{align*}
    We then know that the permutation $\pi_{a,\alpha} \colon Q_a(F) \to Q_\alpha(F)$ given by $(a, b) \mapsto (\alpha, b + F(a) + F(\alpha))$ satisfies $\restr{d_{\graph F}}{Q_a(F)} = \restr{d_{\graph F}}{Q_{\alpha}(F)} \circ \pi_{a,\alpha}$, implying $d_{\graph F}$ is uniform on $\mathcal{Q}(\F_2^n, F)$, as desired.
\end{proof}

We can apply \Cref{thm:APNPlateauedComponent-UniformExcludeDist} to the Gold and Kasami functions in particular. 

\begin{corollary}\label{cor:GoldKasamiUniform}
    Suppose $F \colon \F_{2^n} \to \F_{2^n}$ is a Gold or Kasami function.
    Then $d_{\graph F}$ is uniform on $\mathcal{Q}(\F_{2^n}, F)$. 
\end{corollary}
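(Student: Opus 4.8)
The plan is to deduce everything from \Cref{thm:APNPlateauedComponent-UniformExcludeDist}, splitting into the cases $n$ odd and $n$ even, since the hypothesis ``all component functions unbalanced'' behaves very differently with the parity of $n$ (indeed for $n$ odd the map $x\mapsto x^d$ is a permutation, forcing $W_F(0,v)=\sum_{y}(-1)^{\tr(vy)}=0$ for every $v\neq 0$, so \emph{every} component is balanced and the theorem cannot be applied). When $n$ is odd, both Gold and Kasami functions are AB (\Cref{tab:ABinf-families}), so by \Cref{thm:AB-vanDamFlaass} the graph $\graph F$ is a $\left(\tfrac{2^n-2}{6}\right)$-cover, whence $d_{\graph F}$ is constant by \Cref{prop:kcovers-constantDist-classification}; a constant exclude distribution is trivially uniform on any equally-sized partition, in particular on $\mathcal{Q}(\F_{2^n}, F)$. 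So the odd case needs no new work.

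The substance is the case $n$ even, where I would verify the three hypotheses of \Cref{thm:APNPlateauedComponent-UniformExcludeDist} for a Gold or Kasami function $F$: that $F$ is APN, that $F$ is plateaued, and that every component function $v\cdot F$ with $v\neq 0$ is unbalanced. APN-ness is classical (\Cref{tab:APN-inf-families}). For plateauedness: a Gold function is quadratic, and every quadratic vectorial function is plateaued because each derivative $D_aF$ is affine, which forces each component Walsh spectrum $W_F(\cdot,v)$ to be $\{0,\pm\lambda_v\}$-valued; for a Kasami function with $n$ even one instead invokes the known (Gold-like, three-magnitude) Walsh spectrum $\{0,\pm 2^{n/2},\pm 2^{n/2+1}\}$ of Kasami power functions, from which plateauedness follows (see \cite{CarletBook}).

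The remaining, and most delicate, point is that all component functions are unbalanced, i.e. $W_F(0,v)\neq 0$ for every nonzero $v$. For a Gold function this can be done by hand: $v\cdot F(x)=\tr(vx^{2^k+1})$ is a quadratic Boolean form, and since $\gcd(k,n)=1$ with $n$ even forces $k$ odd, the primitive cube root of unity $\omega\in\F_4\subseteq\F_{2^n}$ satisfies $\omega^{2^k+1}=\omega^2\cdot\omega=1$; a short computation with the polar form then shows its radical $V_v$ has size $1$ or $4$, and when $|V_v|=4$ the three nonzero elements of $V_v$ form a single $\F_4^\ast$-orbit on which $v\cdot F$ is constant, so $v\cdot F|_{V_v}$ cannot be a nonzero linear form and must vanish identically; in either case $W_F(0,v)=\pm 2^{(n+\dim V_v)/2}\neq 0$. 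For a Kasami function, which is not quadratic, there is no such elementary argument, and I would instead cite the verification already carried out in the proof of Corollary 3 of \cite{carlet_apnGraphMaximal} (the same source as \eqref{eqn:APNplateauedfcn-unbalancedcomponenets}), where Gold and Kasami functions with $n$ even are shown to be APN plateaued functions all of whose component functions are unbalanced.

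With the three hypotheses in hand, \Cref{thm:APNPlateauedComponent-UniformExcludeDist} immediately gives that $d_{\graph F}$ is uniform on $\mathcal{Q}(\F_{2^n}, F)$, finishing the even case and hence the corollary. The main obstacle is precisely the unbalanced-component condition for Kasami functions in even dimension: unlike the Gold case there is no quick quadratic-form proof, so the argument must lean on (nontrivial) known facts about the Walsh spectra of Kasami power functions — exactly the input imported from \cite{carlet_apnGraphMaximal}.
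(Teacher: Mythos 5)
Your proposal is correct, and the odd case, the plateauedness step, and the final appeal to \Cref{thm:APNPlateauedComponent-UniformExcludeDist} all match the paper. The genuine divergence is in how you verify that every component function is unbalanced when $n$ is even. The paper handles Gold and Kasami in one stroke via Dobbertin's theorem that \emph{any} APN power function over $\F_{2^n}$ with $n$ even is $3$-to-$1$ on $\F_{2^n}^\ast$: then $W_F(0,v)=\sum_{x}(-1)^{v\cdot F(x)}=1+3\sum_{y}(-1)^{v\cdot y}$ (the outer sum over the nonzero part of $\im F$) is congruent to $1$ modulo $3$, hence nonzero for every $v\neq 0$. You instead give a hands-on quadratic-form argument for Gold (radical of size $1$ or $4$, the nonzero radical elements forming a single $\F_4^\ast$-orbit on which the linear restriction must vanish, so $W_F(0,v)=\pm 2^{(n+\dim V_v)/2}$), which is correct and arguably more informative since it pins down the actual Walsh value at $u=0$ rather than just its nonvanishing; but for Kasami you are forced to outsource the whole claim to Carlet's Corollary 3, since the quadratic-form machinery is unavailable. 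The paper's mod-$3$ argument is both shorter and strictly more general (it applies to every APN power function in even dimension, not just the plateaued ones), at the cost of invoking Dobbertin's $3$-to-$1$ theorem as a black box; your Gold computation is self-contained but does not scale to Kasami. Neither route has a gap, so this is a difference of economy rather than correctness.
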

\begin{proof}
    If $n$ is odd, then $F$ is AB, or in other words, $\graph F$ is a $(\frac{2^n-2}{6})$-cover.
    Hence, $d_{\graph F}$ is uniform on $\mathcal{Q}(\F_{2^n}, F)$ if $n$ is odd by \Cref{prop:kcovers-constantDist-classification}.

    Suppose $n$ is even.
    As proved by Dobbertin, when $n$ is even, any APN power function over $\F_{2^n}$ is $3$-to-$1$ on $\F_{2^n}^\ast$ (see \cite[Proposition 165]{CarletBook} for a proof).
    Observe that for any $v \in \F_{2^n}^\ast$, the component function $v \cdot F$ is unbalanced if and only if $W_F(0,v) \neq 0$.
    Therefore, for any $v \in \F_{2^n}^\ast$,
    \begin{align*}
        W_F(0,v) &= \sum_{x \in \F_{2^n}}(-1)^{v \cdot F(x)} \\
        &= 1 + 3\sum_{y \in \im F} (-1)^{v \cdot y}.
    \end{align*}
    Therefore, all component functions of $F$ are unbalanced.
    
    Moreover, all quadratic functions are plateaued (see for instance \cite{CarletBook}), so if $F$ is a Gold function then it is plateaued because all Gold functions are quadratic.
    Also, $F$ is plateaued if it is a Kasami function because Kasami functions are plateaued when $n$ is even \cite{DillionDobbertin2004NewCyclicDiffSets} and for $n$ coprime with $3$ \cite{YoshiaraKasamiPlateaued}.
    Thus, $d_{\graph F}$ is uniform on $\mathcal{Q}(\F_{2^n}, F)$ by \Cref{thm:APNPlateauedComponent-UniformExcludeDist}.
\end{proof}

Since we can express exclude multiplicity in terms of the Walsh transform, we are also able to express \Cref{thm:APNPlateauedComponent-UniformExcludeDist} in terms of the Walsh transform.

\begin{corollary}
    Suppose $F \colon \F_2^n \to \F_2^n$ is an APN plateaued function whose component functions are all unbalanced. 
    Then, for any $a,\alpha,b \in \F_2^n$ such that $b \neq F(a)$, the equality
    \begin{align*}
        \sum_{(u,v) \in (\F_2^n)^2} (-1)^{u \cdot a + v\cdot b} W_F^3 (u,v)
        = 
        \sum_{(u,v) \in (\F_2^n)^2} (-1)^{u \cdot \alpha + v \cdot (b+F(a)+F(\alpha))} W_F^3 (u,v)
    \end{align*}
    holds. 
    Equivalently, 
    \begin{align*}
        \sum_{
        \substack{(u,v) \in (\F_2^n)^2 \\ 
            u \cdot (a + \alpha)  \neq v \cdot (F(a) + F(\alpha))
            }}
            (-1)^{u \cdot a + v \cdot b} W_F^3(u,v) = 0.
    \end{align*}
    for any $a,\alpha,b \in \F_2^n$ such that $b \neq F(a)$.
\end{corollary}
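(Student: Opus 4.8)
The plan is to read both displayed identities off directly from \Cref{thm:APNPlateauedComponent-UniformExcludeDist}, using the Walsh-transform formula for the exclude distribution recorded in \Cref{prop:excludeMult-InTermsOf-Walsh}. First I would fix $a,\alpha,b \in \F_2^n$ with $b \neq F(a)$ and put $\beta = b + F(a) + F(\alpha)$; since $b \neq F(a)$ we get $\beta \neq F(\alpha)$, so both $(a,b)$ and $(\alpha,\beta)$ lie in $(\F_2^n)^2 \setminus \graph F$ and \Cref{prop:excludeMult-InTermsOf-Walsh} applies at each of them. By \Cref{thm:APNPlateauedComponent-UniformExcludeDist}, $d_{\graph F}$ is locally equivalent at $Q_a(F)$ and $Q_\alpha(F)$ via the permutation $(a,b)\mapsto(\alpha,b+F(a)+F(\alpha))$, so in particular $d_{\graph F}(a,b) = d_{\graph F}(\alpha,\beta)$. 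Substituting the formula of \Cref{prop:excludeMult-InTermsOf-Walsh} on both sides and cancelling the common factor $\tfrac{1}{3\cdot 2^{2n+1}}$ gives exactly the first displayed equality.

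For the equivalent reformulation I would subtract the two Walsh sums and factor the exponential. Writing $(-1)^{u\cdot\alpha + v\cdot\beta} = (-1)^{u\cdot a + v\cdot b}\,(-1)^{u\cdot(a+\alpha) + v\cdot(F(a)+F(\alpha))}$, the summand of the difference becomes $(-1)^{u\cdot a + v\cdot b}\bigl(1 - (-1)^{u\cdot(a+\alpha)+v\cdot(F(a)+F(\alpha))}\bigr) W_F^3(u,v)$, which is $0$ when $u\cdot(a+\alpha) = v\cdot(F(a)+F(\alpha))$ in $\F_2$ and $2(-1)^{u\cdot a + v\cdot b} W_F^3(u,v)$ otherwise. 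Hence the difference of the two sums equals $2\sum_{u\cdot(a+\alpha)\neq v\cdot(F(a)+F(\alpha))} (-1)^{u\cdot a + v\cdot b} W_F^3(u,v)$; since that difference vanishes by the first part, the restricted sum vanishes, which is the second identity.

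There is no real obstacle here: the entire content is supplied by \Cref{thm:APNPlateauedComponent-UniformExcludeDist}, and the corollary is essentially its Walsh-transform restatement. The only points that need a little care are checking that $(\alpha,\beta)\notin\graph F$ so that \Cref{prop:excludeMult-InTermsOf-Walsh} genuinely applies at $(\alpha,\beta)$, and keeping track of the $\F_2$-versus-$\Z$ arithmetic in the exponent when splitting the sum by whether the inner products $u\cdot(a+\alpha)$ and $v\cdot(F(a)+F(\alpha))$ agree.
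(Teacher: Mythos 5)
Your proposal is correct and follows essentially the same route as the paper: derive the first identity by combining \Cref{thm:APNPlateauedComponent-UniformExcludeDist} with \Cref{prop:excludeMult-InTermsOf-Walsh} at the two off-graph points $(a,b)$ and $(\alpha,\beta)$, then obtain the second by subtracting and splitting the sum according to whether $u\cdot(a+\alpha)$ equals $v\cdot(F(a)+F(\alpha))$, exactly the manipulation the paper borrows from the proof of \Cref{prop:WalshBound-ImpliesMaximal}. Your explicit check that $(\alpha,\beta)\notin\graph F$ and the careful case split are the same points the paper relies on, just spelled out.
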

\begin{proof}
    Recall from \Cref{prop:excludeMult-InTermsOf-Walsh} that $d_{\graph F}(a,b) = \frac{1}{3 \cdot 2^{2n+1}} \sum_{(u,v) \in (\F_2^n)^2} (-1)^{u \cdot a + v \cdot b } W_F^3(u,v)$
    for all $(a,b) \in (\F_2^n)^2 \setminus \graph F$.
    Therefore, for any $(a,b), (c,d) \in (\F_2^n)^2 \setminus \graph F$, we know that $d_{\graph F}(a,b) = d_{\graph F}(c,d)$ if and only if $\sum_{(u,v) \in (\F_2^n)^2} (-1)^{u \cdot a + v \cdot b} W_F^3(u,v) = \sum_{(u,v) \in (\F_2^n)^2} (-1)^{u \cdot a + v \cdot d} W_F^3(u,v)$.

    By \Cref{thm:APNPlateauedComponent-UniformExcludeDist}, we know that $d_{\graph F}(a,b) = d_{\graph F}(\alpha, b + F(a) + F(\alpha))$ for all $a,\alpha,b \in \F_2^n$ where $b \neq F(a)$.
    Therefore, 
    \begin{equation*}\label{eqn:excludeMultsEqualInWalshTerms}
        \sum_{(u,v) \in (\F_2^n)^2} (-1)^{u \cdot a 
 + v \cdot b} W_F^3 (u,v)
        = 
        \sum_{(u,v) \in (\F_2^n)^2} (-1)^{u \cdot \alpha + v \cdot (b + F(a) + F(\alpha))} W_F^3 (u,v)
    \end{equation*}
    for any $a,\alpha,b \in \F_2^n$ such that $b \neq F(a)$.
    Moreover, by rearrangement, we have    \begin{equation*}\label{eqn:excludeMultsEqualInWalshTerms2}
        \sum_{(u,v) \in (\F_2^n)^2} (-1)^{u \cdot a + 
v \cdot b} W_F^3 (u,v) - \sum_{(u,v) \in (\F_2^n)^2} (-1)^{u \cdot \alpha + v \cdot (b + F(a) + F(\alpha))} W_F^3 (u,v) = 0.
    \end{equation*}
    By the same reasoning used in the proof of \Cref{prop:WalshBound-ImpliesMaximal}, the equation above is equivalent to  
    \begin{align*}
        \sum_{
        \substack{(u,v) \in (\F_2^n)^2 \\ 
            u \cdot (a+\alpha) \neq v \cdot (F(a) + F(\alpha))
            }}
            (-1)^{u \cdot a + v \cdot b} W_F^3(u,v) = 0.
    \end{align*}
\end{proof}

Finding more families of APN functions $F \colon \F_2^n \to \F_2^n$ whose graphs admit an exclude distribution that is uniform on the partition $\mathcal{Q}(\F_2^n, F)$ may prove to be difficult. 
It would be interesting to classify all APN functions that admit such a graph.
Additionally, it would be interesting to classify all APN functions that do the same for $\mathcal{Q}^\ast(\F_2^n, F) := \mathcal{Q}(\F_2^n, F) \setminus Q_0(F)$.

\section{An application with the Gold and Kasami functions}\label{sec:applicationGoldKasami}
As we have seen, determining the values of the exclude distribution of the graphs of APN functions is a difficult problem in general.
However, if an APN function $F \colon \F_2^n \to \F_2^n$ has a graph whose exclude distribution is uniform on $\mathcal{Q}(\F_2^n, 
F)$, then the induced symmetry slightly reduces the complexity of this problem.
Moreover, in the case of the Gold and Kasami functions, we can determine precisely what values the exclude distributions of their graphs take. 

Suppose $F \colon \F_2^n \to \F_2^n$ is an APN function.
If $(a,b) \in (\F_2^n)^2 \setminus \graph F$, then $d_{\graph F}(a,b)$ is $\frac{s}{6}$ where $s$ is the number of solutions $(x,y,z) \in (\F_2^n)^3$ to the system of equations
\begin{align*}
\begin{cases}
    x + y + z  = a  \\
    F(x) + F(y) + F(z) = b.
\end{cases}
\end{align*}
By substitution, this system of equations becomes
\[
    F(x) + F(y) + F(x+y+a) = b.
\]
So, in order to compute the possible set of values that $d_{\graph F}$ takes, it suffices to compute the number of solutions to $F(x) + F(y) + F(x+y+a) = b$ as ranges $(a,b)$ across $(\F_2^n)^2 \setminus \graph F$.
If $d_{\graph F}$ is uniform on $\mathcal{Q}(\F_2^n, F)$, then $d_{\graph F}$ is locally equivalent at $Q_0(F)$ and $Q_\alpha(F)$ for any $\alpha \in \F_2^n$.
This means we can assume $a=0$ without loss of generality when we are considering the number of solutions as we range across $b \in \F_2^n$.
So, it suffices the number of solutions to 
\begin{equation}\label{eqn:general-uniform}
    F(x) + F(y) + F(x+y) = b
\end{equation}
as $b$ ranges across $b \in \F_2^n \setminus \set{F(0)}$. 
Carlet showed in an example from \cite[Sec. 6.5.1]{CarletBook} that if $F \colon \F_{2^n} \to \F_{2^n}$ is a Gold function or a Kasami function, where $n$ is even, then the number of solutions $(x,y) \in \F_{2^n}^2$ to \cref{eqn:general-uniform} equals:
\begin{equation}\label{eq:Carlet-solns}
    \begin{cases}
        3 \cdot 2^n - 2 & \text{when } b = 0, \\
        2^n \pm 2^{\frac{n}{2} + 1} - 2 & \text{when $b$ is a cube } (\frac{2^n-1}{3} \text{ cases}), \\
        2^n \mp 2^{\frac{n}{2}} - 2 & \text{when $b$ is not a cube } (2\cdot \frac{2^n-1}{3} \text{ cases}) .
    \end{cases}
\end{equation}
Using Carlet's result, we will be able to compute the exact values that the exclude distributions of the graphs of the Gold and Kasami functions take, but we first prove the following lemma.

\begin{lemma}\label{lem:integer-IFF-0}
    Let $n \in \N$.
    Then 
    \begin{enumerate}
        \item $\frac{2^n + 2^{\frac{n}{2}} - 2}{6} \in \Z$ if and only if $n \equiv 0 \mod 4$;
        \item $\frac{2^n + 2^{\frac{n}{2}+1} - 2}{6} \in \Z$ if and only if $n \equiv 2 \mod 4$.
    \end{enumerate}
\end{lemma}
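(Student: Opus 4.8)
The plan is to analyze each statement by reducing the divisibility-by-$6$ condition to separate conditions modulo $2$ and modulo $3$, and then tracking the dependence on $n \bmod 4$. Write $N = n/2$ (note that for $\frac{2^n + 2^{n/2}-2}{6}$ to even make sense as a candidate integer we need $n$ even, which is the only case of interest here, since the lemma is applied to $n$ even; for odd $n$ the expression $2^{n/2}$ is not an integer and the statement is vacuous or interpreted away). The numerator $2^n + 2^{N} - 2$ is always even for $N \geq 1$, so the real content is divisibility by $3$: we need $2^n + 2^N - 2 \equiv 0 \pmod 3$, i.e. $2^n + 2^N \equiv 2 \pmod 3$. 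Since $2 \equiv -1 \pmod 3$, we have $2^k \equiv (-1)^k \pmod 3$, so the condition becomes $(-1)^n + (-1)^N \equiv -1 \pmod 3$, equivalently $(-1)^n + (-1)^{n/2} \equiv 2 \pmod 3$.

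First I would carry out the case analysis on $n \bmod 4$ for part (1). If $n \equiv 0 \pmod 4$, then $n$ is even so $(-1)^n = 1$, and $n/2$ is even so $(-1)^{n/2} = 1$; the sum is $2 \equiv 2 \pmod 3$, and we must also confirm divisibility by $2$ — since $n \geq 4$ here, $2^n$ and $2^{n/2}$ are both even, so $2^n + 2^{n/2} - 2$ is even, giving divisibility by $6$. If $n \equiv 2 \pmod 4$, then $(-1)^n = 1$ but $n/2$ is odd so $(-1)^{n/2} = -1$; the sum is $0 \not\equiv 2 \pmod 3$, so $3 \nmid$ numerator and the expression is not an integer. (For $n$ odd the expression is not even rational in the relevant sense, so it is not in $\Z$; one can dispatch this quickly or note the lemma's intended scope.) Hence $\frac{2^n + 2^{n/2}-2}{6} \in \Z \iff n \equiv 0 \pmod 4$, establishing (1).

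For part (2), the numerator is $2^n + 2^{n/2+1} - 2 = 2^n + 2 \cdot 2^{n/2} - 2$. Divisibility by $2$ is again automatic for $n \geq 2$. Modulo $3$: $2^n + 2^{n/2+1} \equiv (-1)^n + (-1)^{n/2+1} \pmod 3$, and we need this $\equiv 2 \pmod 3$. If $n \equiv 0 \pmod 4$: $(-1)^n = 1$, $n/2$ even so $n/2+1$ odd, $(-1)^{n/2+1} = -1$; sum is $0 \not\equiv 2$. If $n \equiv 2 \pmod 4$: $(-1)^n = 1$, $n/2$ odd so $n/2+1$ even, $(-1)^{n/2+1} = 1$; sum is $2 \equiv 2 \pmod 3$, and divisibility by $2$ holds, so the expression is an integer. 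Thus $\frac{2^n + 2^{n/2+1}-2}{6} \in \Z \iff n \equiv 2 \pmod 4$, establishing (2).

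There is no serious obstacle here; the only thing to be careful about is the edge behavior at small $n$ (e.g. $n = 2$: $\frac{4 + 4 - 2}{6} = 1 \in \Z$ and indeed $2 \equiv 2 \pmod 4$, consistent with (2); and $\frac{4 + 2 - 2}{6} = \frac{2}{3} \notin \Z$, consistent with (1)) and making sure the divisibility-by-$2$ claim is stated correctly rather than silently assumed. I would also remark explicitly that since the lemma will only be applied with $n$ even (the Gold/Kasami setting of \Cref{thm:GoldKasami-MAIN}), the quantities $2^{n/2}$ and $2^{n/2+1}$ are genuine integers, so the phrase "$\in \Z$" is meaningful; the clean statement is really the mod-$3$ computation above, since the factor of $2$ never causes trouble.
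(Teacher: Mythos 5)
Your proof is correct and follows essentially the same route as the paper's: both dismiss odd $n$ via irrationality of $2^{n/2}$, factor out the always-present factor of $2$, and reduce to a divisibility-by-$3$ check using the period-two behavior of $2^k \bmod 3$ (your $(-1)^k$ formulation versus the paper's explicit $n=4m$, $n=4m+2$ substitutions). No gaps.
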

\begin{proof}
    Since $2^{\frac{n}{2}}$ is irrational for all odd $n$, it is clear that $\frac{2^n + 2^{\frac{n}{2}} - 2}{6}$ and $\frac{2^n + 2^{\frac{n}{2}+1} - 2}{6}$ are never integers when $n$ is odd.
    So, we only consider the cases when $n$ is even.

    Suppose that $n \equiv 0 \mod 4$.
    Then there exists some $m \in \Z$ such that $n = 4m$.
    For any natural number $k$, it is clear that $2^k$ is congruent to either $2$ or $1$ modulo $3$, depending on whether $k$ is odd or even, respectively.
    For this reason, we know that $2^k - 1 \mod 3$ is $1$ if $n$ is odd and $0$ otherwise.
    Hence, $2^{4m-1} -1 \equiv 1 \mod 3$ and $2^{2m-1} \equiv 2 \mod 3$.
    This implies that $2^{4m-1}+2^{2m-1} -1$ is divisible by $3$.
    So
    \begin{align*}
        \frac{2^n + 2^{\frac{n}{2}} - 2}{6} &= \frac{2^{4m-1} + 2^{2m-1} - 1}{3} 
    \end{align*}
    is an integer.
    Additionally, we know that $2^{2m} \equiv 1 \mod 3$, implying that
    \begin{align*}
        \frac{2^n + 2^{\frac{n}{2}+1} - 2}{6} &= \frac{2^{4m-1} + 2^{2m} - 1}{3}
    \end{align*}
    is not an integer.

    Now, suppose that $n \equiv 2 \mod 4$.
    Then there exists $m \in \Z$ such that $n = 4m + 2$.
    Then $2^{4m+1} - 1\equiv 1 \mod 3$ and $2^{2m} \equiv 1 \mod 3$.
    This directly implies that $2^{4m+1} + 2^{2m} - 1$ is not divisible by $3$.
    Hence
    \begin{align*}
        \frac{2^n + 2^{\frac{n}{2}} - 2}{6} &= \frac{2^{4m+1} + 2^{2m} - 1}{3}
    \end{align*}   
    is not an integer.
    Moreover, we know that $2^{2m+1} \equiv 2 \mod 3$, so 
    \begin{align*}
        \frac{2^n + 2^{\frac{n}{2}+1} - 2}{6} &= \frac{2^{4m+1} + 2^{2m+1} - 1}{3} 
    \end{align*}
    is an integer.
    Thus, both (1) and (2) hold.
\end{proof}

We now characterize the exclude distributions of the graphs of the Gold and Kasami functions.

\begin{proof}[Proof of \Cref{thm:GoldKasami-MAIN}]
    Recall that $d_{\graph F}$ is uniform on $\mathcal{Q}(\F_{2^n}, F)$ by \Cref{cor:GoldKasamiUniform}.
    By using what we have discussed and Carlet's result from \cref{eq:Carlet-solns}, we deduce that there are $2^n \cdot \frac{2^n - 1}{3}$ exclude points of $\graph F$ of multiplicity $\frac{1}{6}(2^n \pm 2^{\frac{n}{2} + 1} - 2)$ and there are $2^{n+1} \cdot \frac{2^n-1}{3}$ exclude points of $\graph F$ of multiplicity $\frac{1}{6}(2^n \mp 2^{\frac{n}{2}} - 2)$.
    However, we can remove the ``$\pm$'' term by applying \Cref{lem:integer-IFF-0}.
    Hence, there are $2^n \cdot \frac{2^n - 1}{3}$ exclude points of $\graph F$ of multiplicity $\frac{2^n + 2^{\frac{n}{2}+1}-2}{6}$ or $\frac{2^n - 2^{\frac{n}{2}+1}-2}{6}$, when $n \equiv 2 \mod 4$ or $n \equiv 0 \mod 4$, respectively. 
    In other words, there are $2^n \cdot \frac{2^n -1}{3}$ points in $\F_{2^n}^2 \setminus \graph F$ that map to $\alpha(n)$ under $d_{\graph F}$, so (1) holds.
    Moreover, there are $2^{n+1} \cdot \frac{2^n - 1}{3}$ exclude points of $\graph F$ of multiplicity $\frac{2^n + 2^{\frac{n}{2}}-2}{6}$ or $\frac{2^n - 2^{\frac{n}{2}}-2}{6}$, when $n \equiv 0 \mod 4$ or $n \equiv 2 \mod 4$, respectively.
    Similarly, there are $2^{n+1}  \cdot \frac{2^n -1}{3}$ points in $\F_{2^n}^2 \setminus \graph F$ that map to $\beta(n)$ under $d_{\graph F}$, so (2) holds.
    Finally, (3) holds because the size of $d^{-1}_{\graph F}(\set{\alpha(n), \beta(n)})$ is $2^{2n} - 2^n = |\F_{2^n}^2 \setminus \graph F|$.
\end{proof}

\section{Future work and computational results}\label{sec:futureWork}

Finding APN functions $F \colon \F_2^n \to \F_2^n$ such that $d_{\graph F}$ is uniform on $\mathcal{Q}(\F_2^n, F)$ are particularly interesting. 
This is because if $F$ is such a function, then $\graph F$ is non-maximal if and only if $\graph F$ has at least $2^n$ points with exclude multiplicity $0$.
So, any APN function whose graph has an exclude distribution that is uniform on this partition has to meet a much stronger condition to be non-maximal.
This motivates the following conjecture.
\begin{conjecture}\label{conj:uniform-implies-maximal}
    Suppose $F \colon \F_2^n \to \F_2^n$ is an APN function.
    If $d_{\graph F}$ is uniform on $\mathcal{Q}(\F_2^n, F)$, then $\graph F$ is maximal.
\end{conjecture}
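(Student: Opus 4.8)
The plan is to push the strategy behind \Cref{thm:LocalEquiv-permutation-implies-maximal} as far as the weaker hypothesis allows, and then to isolate the single gap that remains. First I would normalize: replacing $F$ by $F + F(0)$ translates $\graph F$ by $(0, F(0))$, and the bijection $(a,b) \mapsto (a, b + F(0))$ identifies $Q_a(F + F(0))$ with $Q_a(F)$ while preserving exclude multiplicities, so it preserves both maximality of the graph and uniformity on the partition; hence we may assume $F(0) = 0$. Under this normalization $Q_0(F) = \set{0} \times (\F_2^n \setminus \set{0})$, and since $d_{\graph F}$ is uniform on $\mathcal{Q}(\F_2^n, F)$ the multiset of values of $d_{\graph F}$ on $Q_\alpha(F)$ does not depend on $\alpha$. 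In particular, if $d_{\graph F}(0,b) > 0$ for every $b \neq 0$, then no fiber $Q_\alpha(F)$ contains a point of exclude multiplicity $0$, and $\graph F$ is maximal. So everything reduces to positivity of $d_{\graph F}$ on the single fiber $Q_0(F)$.

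Next, recall from the proof of \Cref{prop:excludeMult-InTermsOf-Walsh} that for $b \neq F(\alpha)$ one has $6\, d_{\graph F}(\alpha, b) = |\set{(x,y,z) \in (\F_2^n)^3 : x+y+z = \alpha,\ F(x)+F(y)+F(z) = b}|$. Writing $N(\alpha, b) := 6\, d_{\graph F}(\alpha, b + F(\alpha))$ for $b \neq 0$, so that $N(0,b) = 6\, d_{\graph F}(0, b)$, a substitution shows $\sum_{\alpha \in \F_2^n} N(\alpha, b) = |\set{(x,y,z) \in (\F_2^n)^3 : F(x)+F(y)+F(z)+F(x+y+z) = b}|$, which is at least $1$ for $b \neq 0$ by Dillon's observation. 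In \Cref{thm:LocalEquiv-permutation-implies-maximal} one concludes by noting that the canonical permutation $(a,b) \mapsto (\alpha, b + F(a) + F(\alpha))$ makes $N(\alpha, b)$ independent of $\alpha$: then the $2^n$ equal nonnegative integers $N(\alpha, b)$ sum to a positive multiple of $2^n$, so each of them — in particular $N(0,b)$ — is positive. The entire content of \Cref{conj:uniform-implies-maximal} is to run this final step when one only knows that $d_{\graph F}$ is locally equivalent at $Q_0(F)$ and $Q_\alpha(F)$ by \emph{some} permutation, rather than by the canonical shift.

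I see two routes. The first is to prove that, for APN $F$, uniformity on $\mathcal{Q}(\F_2^n, F)$ already forces $d_{\graph F}$ to be locally equivalent at $Q_a(F)$ and $Q_\alpha(F)$ by the canonical permutation, which would reduce \Cref{conj:uniform-implies-maximal} to \Cref{thm:LocalEquiv-permutation-implies-maximal}. For this I would work with $g(a,b) := |\set{(x,y,z) \in (\F_2^n)^3 : x+y+z = a,\ F(x)+F(y)+F(z) = b}|$, which by \Cref{lem:exclude-mult-Walsh} equals $2^{-2n} \sum_{(u,v) \in (\F_2^n)^2} (-1)^{v \cdot b + u \cdot a} W_F^3(u,v)$, is nonnegative, satisfies $\sum_b g(a,b) = 2^{2n}$ for every $a$, equals $6\, d_{\graph F}$ off $\graph F$, and takes the constant value $3 \cdot 2^n - 2$ on $\graph F$; the hope is that these rigid constraints, together with the multiset equalities coming from uniformity, pin the realizing bijections down to the translations $b \mapsto b + F(\alpha)$. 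The second route is to avoid the canonical permutation entirely and argue by counting: if $\graph F$ were non-maximal, uniformity would force every fiber $Q_a(F)$ to contain the same number $z \geq 1$ of zero-multiplicity points, hence $z \cdot 2^n$ in all, and one would look for a contradiction with \Cref{prop:sumMults-equal-choose3-improved}, \Cref{prop:emax-emin-bound-implies-maximal}, or the Walsh-side bound of \Cref{prop:WalshBound-ImpliesMaximal}, perhaps supplemented by higher-moment identities for $W_F$ valid for APN functions.

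The hard part is exactly this gap. Uniformity on $\mathcal{Q}(\F_2^n, F)$ is only a statement about multisets: it says the fibers $Q_a(F)$ carry the same \emph{collection} of exclude multiplicities, whereas every appeal to Dillon's observation needs the individual values $N(\alpha, b)$ — and in particular $N(0,b)$ — controlled for each fixed $b$, which is precisely what the canonical shift supplies and what multiset information alone does not. Worse, the maximality criteria available to us, \Cref{prop:emax-emin-bound-implies-maximal} and \Cref{prop:WalshBound-ImpliesMaximal}, bound only $e_{\max}(\graph F) - e_{\min}(\graph F)$, a quantity on which uniformity imposes no constraint at all, since the spread of the common fiber-multiset is unconstrained a priori. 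So the crux is to decide whether uniformity secretly entails the canonical local equivalence, or else to produce a counting identity strong enough to rule out an entire fiber of zero-multiplicity points; I would attempt the former first, as it is the cleaner statement, but I do not know that it is even true.
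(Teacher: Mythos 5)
The statement you were asked to prove is \Cref{conj:uniform-implies-maximal}, which the paper itself leaves \emph{open}: it is stated as a conjecture in \Cref{sec:futureWork} with no proof, only the motivating remark that under the uniformity hypothesis a non-maximal $\graph F$ would need at least $2^n$ points of exclude multiplicity $0$ (one in each fiber $Q_a(F)$). So there is no argument in the paper to compare yours against, and your proposal, by your own admission, does not close the gap either. To be clear: what you have written is a correct and well-organized analysis of why the conjecture resists the paper's methods, not a proof.

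Your diagnosis of the obstruction is accurate and matches the paper's implicit framing. The reduction to positivity of $d_{\graph F}$ on the single fiber $Q_0(F)$ is right, as is the observation that the proof of \Cref{thm:LocalEquiv-permutation-implies-maximal} hinges on the \emph{canonical} permutation $(a,b) \mapsto (\alpha, b + F(a) + F(\alpha))$: it is only because $N(\alpha, b)$ is constant in $\alpha$ for each \emph{fixed} $b$ that Dillon's observation (which controls the sum over $\alpha$) forces each summand, hence $N(0,b)$, to be positive. Uniformity on $\mathcal{Q}(\F_2^n, F)$ gives only equality of multisets of values across fibers, which permits the zero values to sit at different $b$'s in different fibers, and then Dillon's observation says nothing about any individual $N(0,b)$. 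Your "second route" is exactly the counting statement the paper records — non-maximality would force $z \cdot 2^n$ zero points for some $z \geq 1$ — and you correctly note that \Cref{prop:emax-emin-bound-implies-maximal} and \Cref{prop:WalshBound-ImpliesMaximal} constrain only $e_{\max} - e_{\min}$, on which uniformity imposes nothing. Your "first route" (uniformity forces the canonical local equivalence) would indeed reduce the conjecture to \Cref{thm:LocalEquiv-permutation-implies-maximal}, but you give no argument for it and, as you say, it may well be false; note that \Cref{thm:APNPlateauedComponent-UniformExcludeDist} obtains the canonical permutation only under the much stronger plateaued-with-unbalanced-components hypothesis, via \cref{eqn:APNplateauedfcn-unbalancedcomponenets}, not from uniformity alone. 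In short: no error, but no proof — the statement remains a conjecture.
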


Also, our computer calculations suggest that power functions always have graphs whose exclude distributions take value $\frac{2^n -2}{6}$ at points of the form $(a,0)$ and $(0,b)$ where $a \in \F_2^n$ and $b \in \F_2^n$.

\begin{conjecture}\label{conj:APN-power-ExcludeConst-on-zero-flat}
    Suppose $n$ is odd.
    Let $F \colon \F_{2^n} \to \F_{2^n}$ be a power function $F(x) = x^d$.
    If $F$ is APN, then $d_{\graph F}(a,0) = d_{\graph F}(0,b) = \frac{2^n-2}{6}$ for any $a \in \F_2^n$ and any $b \in \F_2^n$ such that $b \neq 0$.
\end{conjecture}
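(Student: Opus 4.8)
The plan is to prove \Cref{conj:APN-power-ExcludeConst-on-zero-flat} by exploiting the scalar symmetry of the graph of a power function, together with the single extra fact that an APN power function in odd dimension is a permutation. Since $F(0)=0$ forces $(0,0)\in\graph F$, the claim at $(a,0)$ is to be read for $a\neq0$, while $(0,b)\notin\graph F$ whenever $b\neq0$; for all such points $b\neq F(a)$, so (as in the proof of \Cref{prop:excludeMult-InTermsOf-Walsh}) there are no degenerate triples and $6\,d_{\graph F}(a,b)$ equals the number of $(x,y,z)\in(\F_{2^n})^3$ with $x+y+z=a$ and $F(x)+F(y)+F(z)=b$.

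First I would establish that $\gcd(d,2^n-1)=1$, i.e.\ that $F$ is a permutation. This is standard, and can be seen from a counting argument at difference $b=0$: writing $e=\gcd(d,2^n-1)$, the number of ordered pairs of distinct elements of $\F_{2^n}^\ast$ lying in a common fibre of $t\mapsto t^d$ is $(2^n-1)(e-1)$, and each such pair $(x,x')$ is exactly a solution $(\xi,a)=(x,x+x')$, $a\neq0$, of $F(\xi+a)+F(\xi)=0$; the APN property caps the number of solutions for each fixed $a\neq0$ at $2$, so $(2^n-1)(e-1)\le 2(2^n-1)$, hence $e\le3$, and since $e\mid 2^n-1$ is odd while $e=3$ requires $n$ even, we get $e=1$ for $n$ odd.

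Next comes the slice $b=0$. For $a\neq0$ the scaling $(x,y,z)\mapsto(\lambda x,\lambda y,\lambda z)$, $\lambda\in\F_{2^n}^\ast$, bijects the set of $(x,y,z)$ with $x+y+z=a$, $x^d+y^d+z^d=0$ onto the corresponding set for $\lambda a$, so $d_{\graph F}(a,0)$ is a constant $m$ over all $a\neq0$. Summing the solution counts over all $a\in\F_{2^n}$ gives $|\{(x,y,z):x^d+y^d+z^d=0\}|$, which equals $2^{2n}$ since $t\mapsto t^d$ is a bijection of $\F_{2^n}$. The $a=0$ term of this sum is $|\{(x,y):x^d+y^d+(x+y)^d=0\}|$; by the Sidon property of $\graph F$ there is no solution with $0,x,y,x+y$ pairwise distinct (such a quadruple would be four distinct points of $\graph F$ summing to $0$), so the solutions are precisely those with $x=0$, $y=0$, or $x=y$, numbering $3\cdot2^n-2$. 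Hence $2^{2n}=(3\cdot2^n-2)+(2^n-1)\cdot6m$, giving $m=\frac{2^n-2}{6}$. The slice $a=0$ is handled the same way: for $b\neq0$, scaling $(x,y)\mapsto(\lambda x,\lambda y)$ shows $6\,d_{\graph F}(0,b)=|\{(x,y):F(x)+F(y)+F(x+y)=b\}|$ is constant in $b\neq0$ (here using that $\lambda\mapsto\lambda^d$ is onto $\F_{2^n}^\ast$), and since these counts sum over all $b$ to $2^{2n}$ with the $b=0$ term again $3\cdot2^n-2$, each equals $2^n-2$. Alternatively one can note that $x\mapsto x^{d^{-1}}$, with $d^{-1}$ taken mod $2^n-1$, is again APN and that the coordinate swap is an affine automorphism carrying its graph onto $\graph F$ and $(0,b)$ onto $(b,0)$, then invoke \Cref{prop:affineequiv-implies-edequiv} to reduce the second slice to the first.

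The only step with real content is the permutation fact; if one is unwilling to take that as known, the second paragraph proves it. Everything else is bookkeeping combined with the Sidon property, so I would not expect a serious obstacle — though one must stay alert to degenerate solutions, which is exactly why the statement restricts to the points $(a,0)$ and $(0,b)$ lying off $\graph F$.
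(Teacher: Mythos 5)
There is no proof in the paper to compare against: \Cref{conj:APN-power-ExcludeConst-on-zero-flat} is stated as an open conjecture, supported only by computer calculations for small $n$. As far as I can check, your argument is a complete and correct proof of it. Each ingredient holds up. (i) Your fibre-counting derivation of $\gcd(d,2^n-1)=1$ is sound (and the fact itself is standard: APN power functions are permutations for $n$ odd). (ii) The scalar action $(x,y,z)\mapsto(\lambda x,\lambda y,\lambda z)$ genuinely bijects the solution set over $(a,0)$ onto that over $(\lambda a,0)$, and transitivity of $\F_{2^n}^\ast$ on itself makes the count constant on the slice $b=0$; for the slice $a=0$ you correctly flag that constancy over $b\neq 0$ needs $\lambda\mapsto\lambda^d$ to be onto $\F_{2^n}^\ast$, which is exactly where the permutation property enters. (iii) The global count is $2^{2n}$ by bijectivity of $t\mapsto t^d$, the term at $(0,0)$ is $3\cdot 2^n-2$ because the Sidon property of $\graph F$ forbids three distinct points summing to $(0,0)=(0,F(0))\in\graph F$, and $2^{2n}-(3\cdot 2^n-2)=(2^n-1)(2^n-2)$ splits evenly into $2^n-2$ ordered triples per class; since the target points $(a,0)$, $a\neq 0$, and $(0,b)$, $b\neq 0$, lie off $\graph F$, there are no degenerate triples and division by $6$ gives multiplicity $\frac{2^n-2}{6}$ exactly. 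I verified the numerology independently for the inverse function with $n=5$ and it agrees. Your reading of the quantifier (excluding $a=0$, where $d_{\graph F}(0,0)$ is undefined) is the only sensible one.

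One observation worth recording: your proof uses the APN hypothesis only through (a) the permutation property and (b) the Sidon property applied at the single graph point $(0,0)$, so the argument is considerably more local than the conjecture's phrasing suggests. Since this upgrades a stated conjecture of the paper to a theorem, it should be written up as a proposition with full details rather than left as a sketch.
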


Clearly, all AB functions satisfy this conjecture by \Cref{thm:AB-vanDamFlaass}.
Despite this, power functions that are not AB such as the Inverse and Dobbertin appear to satisfy \Cref{conj:APN-power-ExcludeConst-on-zero-flat} for low values of $n$.

\begin{table}[ht!]
    \centering
    \begin{tabular}{c|c|c|c}
    $n$ & Function & $d_{\graph F}$ uniform on $\mathcal{Q}(\F_{2^n}, F)$ & $d_{\graph F}$ uniform on $\mathcal{Q}^\ast(\F_{2^n}, F)$ \\
    \hline
    $4$ & Gold & True & True \\
    $4$ & $x^3 + a^{-1}\tr_n(a^3 x^9)$  & True & True \\
    $5$ & Inverse & False & True \\
    $5$ & Dobbertin & False & True \\
    $6$ & Gold & True & True \\
    $6$ & $x^3 + a^{-1}\tr_n(a^3 x^9)$ & True & True \\
    $6$ & $x^3 + a^{-1} \Tr_3^n(a^3 x^9 + a^6 x^{18})$ & True & True \\
    $6$ & Permutation from \cite{browningDillonMcQuisttanWolfe} & True & True \\
    $7$ & Inverse & False & True \\
    $8$ & Gold  & True  &  True\\
    $8$ & $x^3 + a^{-1}\tr_n(a^3 x^9)$ & True & True  \\ 
    $9$ & Inverse & False & True \\
    $10$ & Gold & True  & True \\
    $10$ & Dobbertin & False & True  \\
    $10$ & $x^3 + a^{-1}\tr_n(a^3 x^9)$ & True & True  
    \end{tabular}
    \caption{APN functions $F \colon \F_{2^n} \to \F_{2^n}$ such that $d_{\graph F}$ is uniform on $\mathcal{Q}(\F_{2^n}, F)$ or $\mathcal{Q}^\ast(\F_{2^n}, F)$, excluding AB functions.}
    \label{tab:computed-examples-uniformExcludeDist}
\end{table}

The only known APN permutation for $n$ even is when $n = 6$.
In a breakthrough result of \cite{browningDillonMcQuisttanWolfe}, it was shown that if $\alpha$ is a primitive element of $\F_{2^6}$, then 
\begin{align*}
    F(x) =
    &\alpha^{25} x^{57}+ \alpha^{30} x^{56}+
    \alpha^{32} x^{50}+\alpha^{37} x^{49}+
    \alpha^{23} x^{48}+\alpha^{39} x^{43}+
    \alpha^{44} x^{42}+\alpha^{4} x^{41}+
    \alpha^{18} x^{40}+\\&\alpha^{46} x^{36}+
    \alpha^{51} x^{35}+\alpha^{52} x^{34}+
    \alpha^{18} x^{33}+\alpha^{56} x^{32}+
    \alpha^{53} x^{29}+\alpha^{30} x^{28}+ 
    \alpha^{1} x^{25}+\alpha^{58} x^{24}+ \\
    &\alpha^{60} x^{22}+\alpha^{37} x^{21}+
    \alpha^{51} x^{20}+\alpha^{1} x^{18}+
    \alpha^{2} x^{17}+\alpha^{4} x^{15}+
    \alpha^{44} x^{14}+\alpha^{32} x^{13}+
    \alpha^{18} x^{12}+\\
    &\alpha^{1} x^{11}+\alpha^{9} x^{10}+
    \alpha^{17} x^{8}+\alpha^{51} x^{7}+
    \alpha^{17} x^{6}+\alpha^{18} x^{5}+
    \alpha^{0} x^{4}+\alpha^{16} x^{3}+
    \alpha^{13} x^{1}
\end{align*}
is an APN permutation over $\F_{2^6}$.
Interestingly, we observed through computer calculations that this permutation has a graph whose exclude distribution $d_{\graph F}$ is uniform on $\mathcal{Q}(\F_{2^6}, F)$.

We list in \Cref{tab:computed-examples-uniformExcludeDist} functions $F$ that are not AB but have graphs whose exclude distributions are uniform on $\mathcal{Q}(\F_{2^n}, F)$ or $\mathcal{Q}^\ast(\F_{2^n}, F) = \mathcal{Q}(\F_{2^n}, F) \setminus Q_0(F)$.
Note that in \Cref{tab:computed-examples-uniformExcludeDist} we use the trace functions $\Tr_3^n(x)$ and $\tr_n(x)$, which are defined to be $\Tr_3^n(x) = x 
+ x^8 + x^{8^2} + \cdots + x^{8^{\frac{n}{3}-1}}$ and $\tr_n(x) = x + x^2 + \cdots + x^{2^n-1}$.

We have yet to find an example of an APN function $F \colon \F_2^n \to \F_2^n$ such that $d_{\graph F}$ is not uniform on $\mathcal{Q}^\ast(\F_2^n, F)$.
This remains an open problem, although it may prove difficult to find such a function.

\section*{Acknowledgements}
The author thanks Robert McGrail and Steven Simon for guidance and advice throughout the work of this research.
The author thanks Lauren Rose for first introducing him to the beautiful mathematics of Sidon sets.
The author also thanks Claude Carlet for helpful references and feedback.

\bibliographystyle{plain}

\end{document}